\newtheorem{mydef}{Definition}
\newtheorem{mylem}{Lemma}
\newtheorem{mypro}{Problem}
\newtheorem{mythm}{Theorem}
\newtheorem{myas}{Assumption}
\newtheorem{myrem}{Remark}
\newcommand{\rfig}[1]{Fig.\,\ref{#1}} 
\newcommand{\req}[1]{\eqref{#1}} 
\newcommand{\rtab}[1]{Table\,\ref{#1}}
\newcommand{\rlem}[1]{Lemma\,\ref{#1}}
\newcommand{\rrem}[1]{Remark\,\ref{#1}}
\newcommand{\rsec}[1]{Section\,\ref{#1}}
\newcommand{\rdef}[1]{Definition\,\ref{#1}}
\newcommand{\ras}[1]{Assumption\,\ref{#1}}
\newcommand{\ralg}[1]{Algorithm\,\ref{#1}}
\newcommand{\rpro}[1]{Problem\,\ref{#1}}
\newcommand{\rline}[1]{line\,\ref{#1}}
\newcommand{\rthm}[1]{Theorem\,\ref{#1}}
\newcommand{\qedwhite}{\hfill \ensuremath{\Box}}
\begin{document}
\title{Synthesizing communication plans for reachability and safety specifications}
\author{\large Kazumune~Hashimoto, Dimos~V.~Dimarogonas,~\IEEEmembership{Senior Member,~IEEE}
\thanks{Kazumune Hashimoto is with the School of Electrical Engineering and Computer Science, KTH Royal Institute of Technology, 10044 Stockholm, Sweden (e-mail: kazumune.hashimoto@z5.keio.jp). His work is supported by the Knut and Alice Wallenberg Foundation.}
\thanks{Dimos V. Dimarogonas is with the ACCESS Linnaeus Center,
School of Electrical Engineering and Computer Science, KTH Royal Institute of Technology, 10044 Stockholm, Sweden (e-mail : dimos@ee.kth.se). His work was supported by the Swedish Research council (VR) and the Knut and Alice Wallenberg Foundation.}
}
\date{}
\maketitle

\begin{abstract}
We propose control and communication strategies for nonlinear networked control systems subject to state and input constraints. The objective is to steer the state of the system towards a prescribed target set in finite time (\textit{reachability}), while at the same time remaining inside a safety set for all time (\textit{safety}). 
By leveraging the notion of $\delta$-ISS control Lyapunov function, we derive a sufficient condition to generate a communication scheduling, such that the resulting state trajectory guarantees reachability and safety. Moreover, in order to alleviate computational burden we present a way to find a suitable communication scheduling by implementing abstraction schemes and standard graph search methodologies. Simulation examples validate the effectiveness of the proposed approach. 
\end{abstract}

\begin{IEEEkeywords}
Event and self-triggered control, constrained control, reachability and safety. 
\end{IEEEkeywords}

\section{Introduction}
\IEEEPARstart{W}{ith}  the advent of communication technologies, 
there has been a growing trend of introducing communication networks in many control applications, such as manufacturing plants, autonomous robots, traffic systems, and so on \cite{gupta2010}. Typically, a control system whose sensors, actuators, and controllers are spatially distributed and connected over communication channels is referred to as a \textit{Networked Control System} (NCS). 
On one hand, the introduction of NCSs has many advantages, such as the elimination of redundant wirings, the availability to control a plant {remotely} in distant areas, and so on \cite{wei2001}. On the other hand, the introduction of NCSs has raised new technological challenges that remain to be solved. 
In particular, one of the crucial challenges lies in the fact that NCSs are subject to \textit{limited resources}, such as limited life-time of battery powered devices and a limited communication bandwidth. For example, sensors and relay nodes are typically battery driven and are equipped with a frugal battery capacity. Thus, designing appropriate feedback controllers to save energy consumption is a crucial problem to be solved. 
In order to reduce redundant utilizations of such limited resources, two relevant control schemes have been proposed, namely,
\textit{event-triggered control} and \textit{self-triggered control} \cite{heemels2012a}. 
In both strategies, the objective is to reduce the communication frequency between the plant and the controller. Specifically, sensor data and control signals are exchanged over a communication network \textit{only when} they are needed, so that communication is given aperiodically.  
Such aperiodic scheme can potentially lead to energy savings of battery powered devices, since the communication over the network is known to be one of the crucial energy consumers. 

So far, event and self-triggered control strategies have been analyzed for many different types of systems, including linear systems \cite{tabuada2007a,heemels2011a,dolk2017c,hashimoto2018b,kishida2018}, nonlinear systems \cite{romain2014a,tabuada2010a}, and distributed control systems \cite{dimos2012a}. In addition, more sophisticated approaches to reduce sensing and communication costs have been provided, such as periodic event-triggered control \cite{heemels2013a,potoyan2013} and dynamic event-triggered control\cite{girard2014a}. Some experimental validations of applying the event-triggered and self-trigered control schemes have also been provided, see e.g., in \cite{araujo2013,chen2016a}. For more different formulations and approaches, see \cite{eventsurvey} for a recent survey paper. 

 
In this paper, we consider the following problem: {``design control and communication strategies, such that the state of the system is steered towards a given target region (\textit{reachability}), while at the same time remaining inside a given safety set for all times (\textit{safety})''}. In other words, we present aperiodic control strategies for achieving \textit{reachability} and \textit{safety}, in contrast to the afore-cited approaches in which the control objective is mostly stabilization (of the origin) or output regulation.  
Reachability and safety controller synthesis have been active areas of research in various control applications, such as flight control systems\cite{aircraftsafety}, motion planning of dynamic robots\cite{gerogios2009a}, safe platooning or control of maneuvers \cite{saferobotcontrol2016a}, to name a few, and many different theoretical foundations and problem formulations have been already proposed, see e.g., \cite{habets2004a,belta2004a,temporalmpc2012a,girard2012a,girard2009a}. 
For example, in \cite{habets2004a}, reachability analysis is given for continuous-time linear systems on a set of full-dimensional polytopes (or simplices) that are partitioned in a state-space. A piece-wise affine control law is designed as a set of vector fields to steer the state to exit a prescribed facet in finite time to enter an adjacent polytope.  Another approach to controller synthesis problem is based on {approximately bisimilar abstractions}\cite{girard2012a,girard2009a}. In this approach, a symbolic model that approximately simulates the behavior of the original control system is constructed through the notion of approximate bisimilar relations, and a safety controller is synthesized based on finding appropriate paths by solving symbolic optimal control problems. 

While many control strategies to achieve reachability and safety have been proposed as illustrated above, only a few works have been provided to accommodate \textit{communication strategies}, aiming at reducing the communication load for NCSs, see e.g., \cite{hashimoto2018b,hashimoto2017d}. 
For example, in \cite{hashimoto2018b} an aperiodic control scheme was proposed by using the notion of control invariant set, which guarantees the existence of a controller such that the state remains inside the safety set for all time. However, a fundamental assumption required in that paper is that the safety set is \textit{convex}; essentially, this assumption is required to obtain the invariant set 
as well as to make the optimal control problem convex. 
Thus, designing suitable control and communication strategies for a \textit{non-convex} safety set may still be a challenging problem, which is the case considered in this paper. 


The approach presented here differs from the existing event and self-triggered control strategies in the following way. We start by defining the notion of \textit{$\delta$-ISS control Lyapunov function}, which is a variant of $\delta$-ISS Lyapunov functions\cite{incremental,incremental_discrete} that are defined for systems without controls. This function is a useful tool to analyze contractive behaviors between any pair of the state trajectories, and has been attracted much attention in various analysis and control design applications, see, e.g., \cite{girard2009a,girard2010a}. 
Based on this function, we next introduce the notion of an \textit{error propagation model}, which quantifies how a closed loop state trajectory can track a given reference according to the occurrence or non-occurrence of communication. The error propagation model is a key ingredient to derive a sufficient condition to generate a communication scheduling that guarantees reachability and safety. Moreover, in order to alleviate the computational burden to find a suitable communication scheduling, 
we next translate the error propagation model into a \textit{symbolic error system}, which is a variant of transition systems. 
By this translation, a suitable communication scheduling to achieve reachability and safety can be efficiently found by implementing standard graph search algorithms.

The proposed approach builds upon our previous work \cite{hashimoto2017d}. In \cite{hashimoto2017d}, we construct a collection of polyhedral contractive sets with different inter-event times and control updates. Then, these sets are translated into the corresponding symbolic system, and the communication scheduling is generated by graph search algorithms. However, the previous approach is only applicable to \textit{linear} discrete-time systems with a \textit{convex} safety set, and, moreover, the control objective is the stabilization to the origin. In the approach presented in this paper, we deal with \textit{nonlinear} discrete-time systems and a \textit{non-convex} safety set and the control objective is to achieve reachability and safety. Thus the proposed approach establishes the novelty with respect to the previous approach in \cite{hashimoto2017d}. 
{As we will see later, the key ideas to achieve this novelty is the utilization of several trajectory generation tools, such as RRT\cite{lavalle1999}, RRT*\cite{karaman2010}, which allows to generate a nominal state trajectory towards a target region in a non-convex safety set (see \rsec{offline_control_sec}), and the utilization of $\delta$-ISS control Lyapunov functions, which allows to analyze how the actual state trajectory differs from the nominal one according to the occurrence of communication (see \rsec{error_propagation_sec}). }


The rest of the paper is organized as follows. We provide some premilinaries and the problem formulation in Section~II. 
The control and communication strategies are proposed in Section~III and IV, respectively. In Section~V, simulation results are given to validate the effectiveness of the proposal including linear and nonlinear systems. Finally, conclusions and future works are provided in Section~VI. \\




\noindent
\textbf{Notations.} Let $\mathbb{R}$, $\mathbb{R}_+$, $\mathbb{N}$, $\mathbb{N}_+$ be the non-negative real, {positive real, non-negative integers}, and {positive integers}, respectively. We denote $\mathbb{N}_{a: b}$ as the set of integers in the interval $[a, b]$. 
A function $\alpha : \mathbb{R}\rightarrow \mathbb{R}$ is called a class ${\cal K}$-function if it is continuous, strictly increasing, and $\alpha (0) = 0$. It is called a class ${\cal K}_{\infty}$-function if it is a class ${\cal K}$-function and $\alpha (r) \rightarrow \infty$ as $r \rightarrow \infty$. We denote by ${\rm Id}:\mathbb{R}\rightarrow \mathbb{R}$ the identity function, i.e., ${\rm Id}(r) = r$, $\forall r\geq 0$.
{The notation $\alpha_1 \circ \alpha_2$ is used to denote the composition of the two functions $\alpha_1$ and $\alpha_2$.} The notation $\sigma_{\max} (A)$ is used to denote the maximum singular value of the matrix $A$. We denote by $\|x\|$ the Euclidean norm of vector $x$. 

\begin{figure}[t]
  \begin{center}
   \includegraphics[width=7cm]{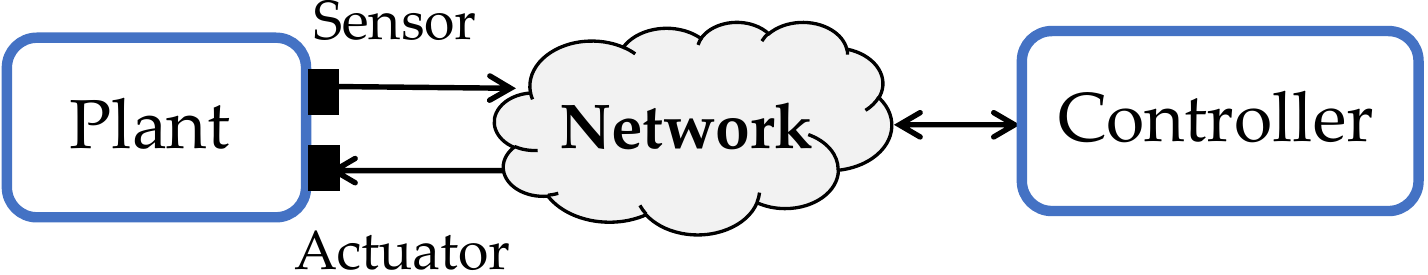}
   \caption{Networked Control Systems} 
   \label{NCS}
  \end{center}
\end{figure}
\section{Problem formulation}
\subsection{Plant dynamics, free-space}
Let us consider a networked control system depicted in \rfig{NCS}, where the plant and the controller are connected over a communication network. We assume that the dynamics of the plant is given by the following nonlinear system: 
\begin{equation}\label{sys}
x_{k+1} = f(x_k, u_k, w_k), 
\end{equation}
where ${x}_k \in \mathbb{R}^n$ is the state at time step $k\in\mathbb{N}$, $u_k \in \mathbb{R}^m$ is the control input, and $w_k \in \mathbb{R}^n$ is the disturbance. 
The control and the disturbance variables are constrained as $u_k \in {\cal U}$, $w_k \in {\cal W}$, $\forall k \in \mathbb{N}$, where 
\begin{align}
{\cal U} &= \{u \in \mathbb{R}^m :  \|u\| \leq u_{\max} \}, \\
{\cal W} &= \{w \in \mathbb{R}^n :  \|w\| \leq w_{\max} \}, 
\end{align}
for given positive constants $u_{\max}$, $w_{\max}$. 
In addition, the state is constrained as $x_k \in {\cal X}$, $\forall k \in \mathbb{N}$, where ${\cal X}$ is a bounded {polygonal} set that can be either a convex or non-convex region. The set ${\cal X}$ represents the \textit{free-space}, in which the state is allowed to move. Inside ${\cal X}$, there exists an initial region ${\cal X}_I \subset {\cal X}$ in which the state is initiated at $k=0$, i.e., $x_0 \in {\cal X}_I$, and a target region ${\cal X}_F \subset {\cal X}$ to which the state aims to move. For simplicity, we assume that the regions are disjoint and are both represented by polytopes. Moreover, let $x_I \in {\cal X}_I$ denote the Chebyshev center \cite{borrelli} of the polytope ${\cal X}_I$. The Chebyshev center is the center of the maximum ball that is included in the polytope and is obtained by solving a linear program (for details, see Section 5.4.5 in \cite{borrelli}). 

For the function $f : \mathbb{R}^n \times \mathbb{R}^m\times \mathbb{R}^n \rightarrow \mathbb{R}^n$, we assume the following: 
\begin{myas}\label{lipschitz_as}
\normalfont
The function $f : \mathbb{R}^n \times \mathbb{R}^m\times \mathbb{R}^n \rightarrow \mathbb{R}^n$ is Lipschitz continuous in $x \in {\cal X}$ and $w \in {\cal W}$, i.e., there exist positive constants $L_{x}$ and $L_{w}$, such that for all $x_1, x_2 \in  {\cal X}$, $w_1, w_2 \in {\cal W}$, and $u \in {\cal U}$, 
\begin{align}
\|f(x_1,  & u, w_1) - f(x_2, u, w_2)\| \notag \\
                   &\leq L_{x} \|x_1 - x_2\| + L_{w} \|w_1 - w_2\|. \label{lipschitz}
\end{align}
\qedwhite
\end{myas}
\subsection{$\delta$-ISS control Lyapunov function} \label{preliminaries}
With respect to the control system \req{sys}, we introduce the following function as the key ingredient to design appropriate control and communication strategies. 

\begin{mydef}\label{cost_def2}
\normalfont 
A smooth function $V: \mathbb{R}^n \times \mathbb{R}^n \rightarrow \mathbb{R}$ is said to be \textit{a $\delta$-ISS control Lyapunov function}, if for all $x, y \in \mathbb{R}^n$, $u \in \mathbb{R}^m$ and $w_1, w_2 \in \mathbb{R}^n$, there exist a smooth function $\kappa: \mathbb{R}^n \times \mathbb{R}^n \times \mathbb{R}^m \rightarrow \mathbb{R}^m$, class ${\cal K}_\infty$-functions $\underline{\alpha}$, $\overline{\alpha}$, $\alpha$, and a class ${\cal K}$-function $\rho$, such that: 
\begin{equation}\label{eq3_valid_cost}
\underline{\alpha} (\|x-y\|) \leq V (x,y) \leq \overline{\alpha} (\|x-y\|) , 
\end{equation}
\begin{align}\label{eq4_valid_cost}
V ( x^+ _{\kappa, w_1},  & y^+ _{u, w_2})- V (x, y) \notag \\
 & \leq - \alpha ( \|x-y\| ) + \rho (\|w_1 - w_2\|), 
\end{align}
where $x^+ _{\kappa, w_1} = f(x, \kappa(x,y, u), w_1)$, $y^+ _{u, w_2} = f(y, u, w_2)$. \qedwhite 
\end{mydef}
The closest notion to \rdef{cost_def2} is a $\delta$-ISS Lyapunov function \cite{incremental,incremental_discrete} that is defined for systems without controls. 
As stated in \cite{incremental,incremental_discrete}, a $\delta$-ISS Lyapunov function is a useful tool to analyze \textit{incremental input-to-state stability}, which captures the contractive behaviors between any pair of the state trajectories. Moreover, the function has been also utilized to obtain finite abstractions for nonlinear control systems in \req{sys}, see, e.g., \cite{girard2009a,girard2010a}. 
In contrast to the afore-cited analysis, in this paper we make use of \rdef{cost_def2} in order to design not only a control strategy such that the resulting state trajectory achieves reachability and safety, but also a {communication strategy} such that the communication reduction is achieved for the NCSs. Note that as shown in \req{eq4_valid_cost}, the state-feedback controller $\kappa$ is applied to only one of the two states (i.e., $x$). The intuition behind here is that we will analyze contractive behaviors between a \textit{closed-loop} state trajectory with the control law $\kappa$ and an \textit{open-loop} state trajectory that will be generated offline, as we will see in later sections. 

Throughout the paper, we assume the existence of the $\delta$-ISS control Lyapunov function:
\begin{myas}\label{cost_as}
\normalfont
For system \req{sys}, there exist smooth functions $V: \mathbb{R}^n \times \mathbb{R}^n \rightarrow \mathbb{R}$ and $\kappa: \mathbb{R}^n \times\mathbb{R}^n \times \mathbb{R}^m \rightarrow \mathbb{R}^m$, such that $V$ is a $\delta$-ISS control Lyapunov function with respect to $\kappa$ satisfying \req{eq3_valid_cost}, \req{eq4_valid_cost}. Moreover, for any $x, y \in \mathbb{R}^n$ and $u\in\mathbb{R}^m$, there exist a class ${\cal K}_\infty$-function $\alpha_u$ and a class ${\cal K}$-function $\rho_u$, such that
\begin{equation}\label{control_assume}
\| \kappa (x, y, u) \| \leq \alpha_u (\|x-y\|) + \rho_u (\|u\|). 
\end{equation}
  \qedwhite
\end{myas}

For example, consider the linear system: $x_{k+1} = f(x_k, u_k, w_k)  = A x_k + Bu_k +w_k$, where the pair $(A, B)$ is assumed to be stabilizable. Let $V (x,y)= \|x-y\|$ be a candidate $\delta$-ISS control Lyapunov function and $\kappa (x, y, u) = -K(x-y) + u$ be the corresponding control law, where $K$ is given such that $A_{cl} := A-BK$ is Hurwitz.  Indeed, the condition \req{eq3_valid_cost} trivially holds and we also have  
\begin{align}
V (x^+ _{\kappa, w_1}, y^+ _{u, w_2}) &= \|A_{cl} (x-y) + w_1 - w_2\| \notag \\
&\leq \sigma_{\max} (A_{cl}) \|x-y\| + \|w_1 - w_2\|, 
\end{align}
so that we have $V (x^+ _{\kappa, w_1}, y^+ _{u, w_2}) - V (x, y) \leq - (1-\sigma_{\max} (A_{cl})) \|x-y\| + \|w_1 - w_2\|$ with $0 < \sigma_{\max} (A_{cl}) < 1$. Thus, the function $V$ is a $\delta$-ISS control Lyapunov function with respect to $\kappa (x, y, u) = -K(x-y) + u$. Moreover, \req{control_assume} holds with $\alpha_u (r)=\sigma_{\max} (K) r$ and $\rho_u (r) = r$. 

\subsection{Overview of the communication strategy}\label{communication_sec}
During the implementation, the plant interacts with the controller over the communication network to update the control inputs in real time. To indicate the communication times, let ${c}_k \in \{0, 1\}$, $k\in\mathbb{N}$ be given by 
\begin{numcases}
{{c}_k = }
	1, \ \ {\rm if\ communication\ occurs\ at }\ k, \\
	0, \ \ {\rm otherwise}. 
\end{numcases}
That is, if $c_k = 1$ the plant transmits the state information $x_k$ to the controller, based on which the control input is updated and transmitted back to the plant. On the other hand, if $c_k = 0$ no communication occurs at $k$. Instead, as we will see later, the plant makes use of a control input that is obtained before the online implementation. 

In this paper, we will present two ways to generate a suitable communication scheduling. The first one is an \textit{offline} approach, in which we preliminary define the communication scheduling $c_k, k\in\mathbb{N}$ before the online implementation. In other words, the communication times are fixed for all state trajectories from ${\cal X}_I$ to ${\cal X}_F$. The offline approach is beneficial in the sense that it does not require any computational effort to generate communication scheduling during online execution. 
On the other hand, the offline communication strategy tends to be conservative, which means that it yields more communication times than the one that is actually (minimally) required to guarantee reachability and safety. In view of this, we further provide an \textit{online} approach as the second communication strategy, in which the controller assigns suitable communication schedulings during online execution. The online communication strategy is given in a self-triggered manner, meaning that for each communication time the controller determines the next communication time based on the state information that is received from the plant. 


\subsection{Problem formulation}
To formulate the problem, we define the \textit{validity} of a state trajectory as follows: 
\begin{mydef}
\normalfont 
For given $x_0 \in {\cal X}_I$, $L\in\mathbb{N}_+$, and disturbance sequence $w_0, w_1,\ldots, w_{L-1} \in {\cal W}$, the trajectory $x_0, x_1, \ldots, x_L$ is called \textit{valid} if the following conditions hold: 
\begin{enumerate}
\item \textit{(Dynamics)}: there exist $u_k \in {\cal U}$, $k \in {\mathbb{N}}_{0:L-1}$, such that $x_{k+1} = f(x_k, u_k, w_k)$, $\forall k \in {\mathbb{N}}_{0:L-1}$; 
\item \textit{(Safety)}: $x_k \in {\cal X}$, $\forall k \in {\mathbb{N}}_{0:L}$; 
\item \textit{(Reachability)}: $x_L \in {\cal X}_F$. \qedwhite
\end{enumerate}
\end{mydef}
That is, the trajectory is valid if there exists a controller such that the state can reach ${\cal X}_F$ in finite time, while at the same time always remaining inside ${\cal X}$ for guaranteeing safety. 
Based on the above, the goal of this paper is to design suitable control and communication strategies, such that the corresponding trajectory becomes valid: 

\begin{mypro}\label{problem}
\normalfont
For a given $x_0 \in {\cal X}_I$, design both control and communication strategies, such that the resulting trajectory is valid for all $w_k \in {\cal W}$, $k \in {\mathbb{N}}$. \qedwhite 
\end{mypro}


\section{Control Strategy}
In this section, we provide a control strategy as a solution to \rpro{problem}. First, we provide an offline procedure to design the control strategy (\rsec{offline_control_sec}). Then, we describe how the control strategy is implemented online (\rsec{control_strategy_sec}). 

\subsection{Offline procedure}\label{offline_control_sec}
In the offline step, we generate a nominal state trajectory from ${\cal X}_{I}$ to ${\cal X}_{F}$. Specifically, we aim to produce a state trajectory $\hat{x}_0, \hat{x}_1, \ldots \hat{x}_L \in {\cal X}$ and the corresponding control $\hat{u}_0, \hat{u}_1, \ldots \hat{u}_{L-1} \in {\cal U}$ for some $L\in {\mathbb{N}}_+$, such that $\hat{x}_0 = x_I$, 
\begin{equation}
\hat{x}_{k+1} = f(\hat{x}_k, \hat{u}_k, 0) \in {\cal X},\ \ \forall k \in {\mathbb{N}}_{0:L-1}, 
\end{equation}
and $\hat{x}_L \in {\cal X}_{F}$. Recall that $x_I$ represents the Chebyshev center of ${\cal X}_I$. Roughly speaking, the trajectory represents a \textit{reference} that the actual state should follow to move from ${\cal X}_{I}$ to ${\cal X}_{F}$. 

So far, numerous techniques have been proposed to generate the reference trajectory as described above. 
Popular ones are the well-known sampling-based algorithms, such as RRT\cite{lavalle1999}, RRT*\cite{karaman2010} and their variants such as g-RRT \cite{dang2008}. Sampling-based algorithms are powerful techniques to find feasible state trajectories even in a complex (non-convex) state-space ${\cal X}$, and have been demonstrated successfully in many control applications, especially in robotics. An alternative method is the cell-decomposition approach \cite{belta2004a,habets2004a}. For example, in \cite{belta2004a} the state-space is decomposed into a set of polytopes, and a piece-wise affine control law is designed for each polytope to steer the state towards the neighboring polytopes. Moreover, we can also utilize optimization-based approaches, such as those solving constrained optimal control problems \cite{collision_free2,ono2011}. 
In view of the many different techniques as illustrated above, the way that the trajectory is derived is beyond the scope of this paper; we can utilize any of the above techniques to obtain the reference state and control trajectories. 


\subsection{Control strategy}\label{control_strategy_sec}
Suppose that we have found reference state and control trajectories $\hat{x}_0, \hat{x}_1, \ldots, \hat{x}_L$, $\hat{u}_0, \hat{u}_1, \ldots, \hat{u}_{L-1}$ in an offline manner according to the procedure presented in the previous subsection. Then, starting from \textit{any} $x_0 \in {\cal X}_I$, the following control strategy is provided during online implementation: for all $k\in {\mathbb{N}}_{0:L-1}$, 
\begin{numcases}
{u_k = }
	\kappa (x_k, \hat{x}_k, \hat{u}_k) , & {\rm if}\ \ $c_k = 1$, \label{closed} \\
	\hat{u}_k, &{\rm if}\ \ $c_k = 0$. \label{open}
\end{numcases}
That is, if the communication is taking place at $k$, the controller applies the state-feedback control law $\kappa$ defined in \ras{cost_as} by using the actual state that is received from the plant. Intuitively, from \rdef{cost_def2} the occurrence of the communication ($c_k = 1$) implies that the error between the actual state $x_k$ and the reference $\hat{x}_k$ potentially becomes smaller. Thus, the occurrence of communication allows the actual state trajectory to track the given reference, which increases the possibility to achieve reachability and safety. On the other hand, if the communication is not given the plant applies the reference $\hat{u}_k$. Although the error may propagate in this case, we can instead reduce the communication frequency by not providing the communication. In the next section, we provide a more quantitative analysis for the above intuition and propose a detailed communication strategy.  
\section{Communication strategy}
Based on the control strategy provided in the previous subsection, we now provide a detailed procedure to generate a communication scheduling $c_k, k\in\mathbb{N}_{0:L-1}$, as well as an implementation algorithm of the communication strategy. 

\subsection{Deriving error propagation model}\label{error_propagation_sec}
Let $v_k \in \mathbb{R}$, $k\in\mathbb{N}_{0:L}$ be given by an error between the actual state and the reference with respect to $V$ at time step $k$, i.e., $v_k = V( x_k,  \hat{x}_k)$, where $V$ is given in \ras{cost_as}. In the following, we describe how this error behaves according to whether communication is given ($c_k = 1$) or not given ($c_k = 0$). For a given $k \in \mathbb{N}_{0:L-1}$, suppose that $c_k = 1$ and the control law in \req{closed} is applied. Let $u_k = \kappa (x_k, \hat{x}_k, \hat{u}_k)$. 
From \req{eq4_valid_cost}, we obtain 
\begin{align*}
V ( x_{k+1}, & \hat{x}_{k+1}) - V(x_k, \hat{x}_k) \notag  \\
& \leq - \alpha_2 (V(x_k, \hat{x}_k)) + \rho (\|w_k\|), 
\end{align*}
where $x_{k+1} = f(x_k, u_k, w_k)$, $\hat{x}_{k+1} = f(\hat{x}_k, \hat{u}_k, 0)$ and $\alpha_2 = \alpha \circ \overline{\alpha}^{-1} $. Thus, we obtain
\begin{align*}
v_{k+1} \leq &  ({\rm Id} - \alpha_2) (v_k) + \rho (\|w_k\|), 
\end{align*}
where ${\rm Id}:\mathbb{R}\rightarrow \mathbb{R}$ denotes the identity function. 
Without loss of generality, we assume that the function $({\rm Id} - \alpha_2)$ is a class ${\cal K}_\infty$-function\footnote{This is due to the fact that for any ${\cal K}_\infty$-function $\alpha_2$, there exists a class ${\cal K}_\infty$-function $\hat{\alpha}_2$ such that: (i) $\hat{\alpha}_2 (r) \leq \alpha_2 (r)$, $\forall r \geq 0$; (ii) ${\rm Id} - \hat{\alpha}_2$ is a class ${\cal K}_\infty$-function, see \cite{iss}. }. In addition to the error propagation of the states, it is required that the control input satisfies the constraint, i.e., $u_k \in {\cal U}$. To derive the condition for this, observe that from \req{control_assume} we obtain 
\begin{align}\label{control_input_upper}
\| \kappa (x_k, \hat{x}_k, \hat{u}_k) \| \leq \alpha_u \circ \underline{\alpha}^{-1} (v_k ) + \rho_u (\|\hat{u}_k\|), 
\end{align}
where we have used $\|x_{k} -\hat{x}_{k}\| \leq \underline{\alpha}^{-1} (V(x_k, \hat{x}_k))$ from \req{eq3_valid_cost}. Thus, $u_k = \kappa (x_k, \hat{x}_k, \hat{u}_k) \in {\cal U}$ if $\alpha_u \circ \underline{\alpha}^{-1} (v_k ) + \rho_u (\|\hat{u}_k\|) \leq u_{\max}$. 

Suppose now that $c_k = 0$ and the control law in \req{open} is applied. Let $u_k = \hat{u}_k$. 
From the Lipschitz continuity in \ras{lipschitz_as}, it follows that 
\begin{equation*}
\|x_{k+1} - \hat{x}_{k+1} \| \leq L_x \|x_k - \hat{x}_k\| + L_w \|w_k\|, 
\end{equation*}
where $x_{k+1} = f(x_k, u_k, w_k)$ and $\hat{x}_{k+1} = f(\hat{x}_k, \hat{u}_k, 0)$ (with $u_k = \hat{u}_k$). Moreover, from \req{eq3_valid_cost} we obtain $\|x_{k} -\hat{x}_{k}\| \leq \underline{\alpha}^{-1} (V(x_k, \hat{x}_k))$ and $\overline{\alpha}^{-1} (V(x_{k+1}, \hat{x}_{k+1})) \leq \|x_{k+1} -\hat{x}_{k+1}\|$. 
Thus, we obtain 
\begin{align*}
v_{k+1} &\leq \overline{\alpha} \left( L_x \underline{\alpha}^{-1} (v_k) + L_w \|w_k\| \right).  
\end{align*}
Note that the input constraint is satisfied for this case, i.e., $u_k = \hat{u}_k \in {\cal U}$.  
Consequently, for both $c_k = 1$ and $0$, we obtain
\begin{equation}\label{vnext}
{v}_{k+1} \leq g(v_k, c_k, \|w_k\|), 
\end{equation}
where the function $g: \mathbb{R} \times \{0, 1\} \times \mathbb{R} \rightarrow \mathbb{R}$ is defined by
\begin{align}
g(v, c, \|w\|) & =\ c \left( ({\rm Id} - \alpha_2) (v) + \rho (\|w\|) \right ) \notag \\
                          & + (1-c) \left\{ \overline{\alpha} \left(L_x \underline{\alpha}^{-1} (v) + L_w \|w\|\right) \right \}. \label{gfunc}
\end{align}
The inequality \req{vnext} indicates that if the communication is given at $k$ (i.e., $c_k = 1$), the error between the actual state and the reference potentially becomes smaller. Indeed, if $v_k$ is large enough to satisfy $\alpha_2 (v_k) > \rho (w_{\max})$, it holds that $v_{k+1} \leq v_k - \alpha_2 (v_k) + \rho (\|w_k\|) < v_k$ and the error gets strictly smaller at the next time. On the other hand, the error may grow according to \req{vnext} (with $c_k = 0$) if the communication is not given. Thus, we can quantitatively evaluate the propagation of the error according to the relation given in \req{vnext}. 
Note that since $\overline{\alpha}$, $\underline{\alpha}$, ${\rm Id} - \alpha_2$, and $\rho$ are class ${\cal K}_\infty$ (or ${\cal K}$)-functions, the function $g$ is \textit{monotone}\cite{monotone} with respect to $v \in \mathbb{R}$ and $\|w\|\in \mathbb{R}$, i.e., for any $c\in\{0,1\}$, $v, v' \in\mathbb{R}$ with $v\leq v'$, and $w, w' \in {\cal W}$ with $\|w\|\leq \|w'\|$, it holds that 
\begin{align}\label{monotone}
g(v, c, \|w\|) \leq g(v', c, \|w\|) \leq g(v', c, \|w'\|). 
\end{align}
As will be shown below, the monotonicity property plays an important role to derive suitable conditions to guarantee  the validity of the state trajectory. 

For given $c_k \in \{0,1\}$, $k\in {\mathbb{N}}_{0:L-1}$ and $\overline{v}_0 \in \mathbb{R}$, let $\overline{v}_{k} \in \mathbb{N}$, $k\in {\mathbb{N}}_{0:L}$ be recursively given by 
\begin{equation}\label{vbar}
\overline{v}_{k+1} = g (\overline{v}_k, c_k, w_{\max}). 
\end{equation}
That is, $\overline{v}_{k}$ represents the \textit{upper bound} of $v_k$ by setting the disturbance sequence as the maximal (worst case) one $\|w_k\| = w_{\max}$ and considering the equality in \req{vnext} for all $k\in\mathbb{N}_{0:L-1}$. In addition, let $v_{k, \max} \in \mathbb{R}_+$, $k\in {\mathbb{N}}_{0:L}$ be given by 
\begin{align}\label{emax}
v_{k, \max} = {{\max}} \{ \varepsilon \in \mathbb{R}: {\cal B}_\varepsilon (\hat{x}_k) \subseteq {\cal X}\}, 
\end{align}
where ${\cal B}_\varepsilon (\hat{x}) = \{x\in\mathbb{R}^n \ |\ V(x, \hat{x}) \leq \varepsilon\}$. The set ${\cal B}_\varepsilon (\hat{x})$ indicates the set of all states around $\hat{x}$ such that the error value is less than $\varepsilon$. From \req{emax} it follows that 
\begin{equation}\label{safety_eq}
V(x_k, \hat{x}_k) \leq v_{k,\max} \implies x_k \in {\cal B}_{v_{k,\max}} (\hat{x}_k) \subseteq {\cal X}. 
\end{equation}
Thus, $v_{k, \max}$ represents the maximum value of the error at $k$ such that the actual value of the state at $k$ guarantees safety. Finally, let $v_{init}, v_{final} \in \mathbb{R}$ be given by 
\begin{align}
v_{init} &= \min \{\varepsilon\in\mathbb{R}: {\cal X}_I \subseteq {\cal B}_{\varepsilon} (\hat{x}_0)\} \label{vinit} 
\end{align}
\begin{align}
v_{final} &= \max \{\varepsilon\in\mathbb{R}:  {\cal B}_{\varepsilon} (\hat{x}_L) \subseteq {\cal X}_F\}.  \label{vfinal}
\end{align}
Based on the above notations, we obtain the following result: 


\begin{mylem}\label{control_strategy_result}
\normalfont
Suppose that the communication scheduling $c_k \in \{0, 1\}$, $k\in {\mathbb{N}}_{0:L-1}$ is designed such that:  
\renewcommand{\labelenumi}{(C.\arabic{enumi})}
\begin{enumerate}
\item $\overline{v}_0 = v_{init}$;
\item $\overline{v}_k \leq v_{k,\max}$, $\forall k \in \mathbb{N}_{1:L}$; 
\item $\overline{v}_L \leq v_{final}$; 
\item $c_k = 1 \implies \alpha_u \circ \underline{\alpha}^{-1} (\overline{v}_k ) + \rho_u (\|\hat{u}_k\|) \leq u_{\max}$, \\ $\forall k \in \mathbb{N}_{0:L-1}$, 
\end{enumerate}
where $\overline{v}_1, \ldots, \overline{v}_L$ are computed according to \req{vbar}. Then, for \textit{any} $x_0 \in {\cal X}_I$ and $w_0, w_1, \ldots, w_{L-1} \in {\cal W}$, the state trajectory $x_0, x_1, \ldots, x_L$ becomes valid by applying the control strategy in \req{closed} and \req{open}. \qedwhite
\end{mylem}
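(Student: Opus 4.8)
The plan is to first establish, by induction on $k$, that the true error never exceeds its worst-case surrogate, i.e. $v_k \le \overline{v}_k$ for all $k \in \mathbb{N}_{0:L}$, and then to read off the three validity requirements from (C.1)--(C.4) together with the geometric quantities $v_{init}$, $v_{final}$, $v_{k,\max}$. For the base case I would use that $x_0 \in {\cal X}_I \subseteq {\cal B}_{v_{init}}(\hat{x}_0)$, which holds by \req{vinit}, so that $v_0 = V(x_0,\hat{x}_0) \le v_{init} = \overline{v}_0$, where the last equality is exactly (C.1).

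For the inductive step, assuming $v_k \le \overline{v}_k$, I would invoke the error-propagation bound \req{vnext}, namely $v_{k+1} \le g(v_k, c_k, \|w_k\|)$, and then apply the monotonicity property \req{monotone} of $g$ twice: once in the error argument, using $v_k \le \overline{v}_k$, and once in the disturbance argument, using $\|w_k\| \le w_{\max}$. This yields $v_{k+1} \le g(\overline{v}_k, c_k, w_{\max}) = \overline{v}_{k+1}$ by the recursion \req{vbar}, closing the induction.

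Given $v_k \le \overline{v}_k$ for all $k$, the three parts of validity follow almost immediately. For the dynamics/input-admissibility part: at steps with $c_k = 0$ one has $u_k = \hat{u}_k \in {\cal U}$ by construction of the reference; at steps with $c_k = 1$, the bound \req{control_input_upper} together with monotonicity of $\alpha_u \circ \underline{\alpha}^{-1}$ and $v_k \le \overline{v}_k$ gives $\|u_k\| \le \alpha_u \circ \underline{\alpha}^{-1}(\overline{v}_k) + \rho_u(\|\hat{u}_k\|) \le u_{\max}$, i.e. $u_k \in {\cal U}$, by (C.4). For safety: $x_0 \in {\cal X}_I \subseteq {\cal X}$, and for $k \in \mathbb{N}_{1:L}$ one has $v_k \le \overline{v}_k \le v_{k,\max}$ by (C.2), hence $x_k \in {\cal B}_{v_{k,\max}}(\hat{x}_k) \subseteq {\cal X}$ by \req{safety_eq}. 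For reachability: $v_L \le \overline{v}_L \le v_{final}$ by (C.3), hence $x_L \in {\cal B}_{v_{final}}(\hat{x}_L) \subseteq {\cal X}_F$ by \req{vfinal}.

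The one place to be careful about — and I expect it to be the only real subtlety, rather than any hard estimate — is the logical ordering. The input-admissibility check at communication steps must come \emph{after} the induction, since it relies on $v_k \le \overline{v}_k$; conversely, the induction is itself unconditional, because \req{vnext} for $c_k = 1$ is derived purely from the defining inequality \req{eq4_valid_cost} of the $\delta$-ISS control Lyapunov function, which is valid for all arguments irrespective of whether $u_k \in {\cal U}$, and for $c_k = 0$ from the Lipschitz bound of \ras{lipschitz_as}. Hence no circularity arises, and everything beyond this point is bookkeeping with the monotone map $g$ and the definitions \req{emax}, \req{vinit}, \req{vfinal}.
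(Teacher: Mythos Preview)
Your proposal is correct and follows essentially the same route as the paper: establish $v_k \le \overline{v}_k$ for all $k$ by induction using the monotonicity of $g$ from \req{monotone} and the recursion \req{vbar}, then read off safety, reachability, and input admissibility from (C.2), (C.3), (C.4) respectively via \req{safety_eq}, \req{vfinal}, and \req{control_input_upper}. Your explicit remark on the logical ordering (that the induction does not presuppose $u_k \in {\cal U}$) is a nice clarification that the paper leaves implicit.
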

\begin{proof}
Suppose that $c_k \in \{0, 1\}$, $k\in {\mathbb{N}}_{0:L-1}$ is designed such that the conditions (C.1)--(C.4) are fulfilled. For any $x_0 \in {\cal X}_I$, let $x_0, x_1, \ldots , x_L$ and $u_0, u_1, \ldots, u_{L-1}$ be the (actual) state and the corresponding control trajectories according to \req{closed} and \req{open}. 
From \req{vinit}, it holds that $x_0 \in {\cal X}_I\subseteq {\cal B}_{\bar{v}_{0}} (\hat{x}_0)$ and so we have $v_0 = V(x_0, \hat{x}_0) \leq \bar{v}_{0}$. Thus, for any $w_0 \in {\cal W}$ we obtain 
\begin{equation*}
v_1 \leq g(v_0, c_0, \|w_0\|) \leq g(\overline{v}_0, c_0, w_{\max}) = \bar{v}_{1}, 
\end{equation*}
where we have used the monotonicity property in \req{monotone}. Since $v_1 \leq \overline{v}_{1}$, we obtain 
\begin{equation*}
v_2 \leq g(v_1, c_1, \|w_1\|) \leq g(\overline{v}_1, c_1, w_{\max}) = \bar{v}_{2}, 
\end{equation*}
for any $w_1 \in {\cal W}$. 
Using the same procedure, we recursively obtain $v_k \leq \overline{v}_{k}$, $\forall k\in\mathbb{N}_{0:L}$ for any $w_0, w_1, \ldots, w_{L-1} \in {\cal W}$. 
From (C.2), it then holds that $v_k \leq v_{k, \max}$, $\forall k\in\mathbb{N}_{1:L}$, which means from \req{safety_eq} that 
\begin{equation}
x_k \in {\cal B}_{v_{k, \max}} (\hat{x}_k) \subseteq {\cal X},
\end{equation}
$\forall k\in\mathbb{N}_{1:L}$. Thus, the state trajectory guarantees safety. Moreover, it follows from (C.3) and \req{vfinal} that 
$x_L \in {\cal B}_{v_{final}} \subseteq {\cal X}_F$, which means that the state trajectory guarantees reachability. 
In addition, from (C.4) and \req{control_input_upper}, $c_k = 1$ implies that 
\begin{align}
\|u_k\| &\leq \alpha_u \circ \underline{\alpha}^{-1} (v_k) + \rho_u (\|\hat{u}_k\| ) \notag \\
& \leq \alpha_u \circ \underline{\alpha}^{-1} (\overline{v}_k) + \rho_u (\|\hat{u}_k\| ) \leq u_{\max}, 
\end{align} 
and $c_k = 0$ implies $\|u_k\| = \|\hat{u}_k\| \leq u_{\max}$. 
Thus, it holds that $u_k \in {\cal U}$, $\forall k \in \mathbb{N}_{0:L-1}$. Therefore, it is shown that the trajectory $x_0, x_1, \ldots, x_L$ becomes valid. Since this holds for any $x_0 \in {\cal X}_I$ and $w_0, w_1, \ldots, w_{L-1} \in {\cal W}$ the proof is complete. 
\end{proof}

\rlem{control_strategy_result} indicates that if the communication scheduling $c_k$, $k\in\mathbb{N}_{0:L-1}$ is given such that the sequence of errors with $\overline{v}_0 = v_{init}$ becomes \textit{small enough} to satisfy (C.2) and (C.3), as well as that the condition (C.4) holds in order to satisfy the input constraint, then every state trajectory starting from $x_0 \in {\cal X}_I$ becomes valid. 

\begin{myrem}[On the computation of $v_{k, \max}$]\label{compute_vmax}
\normalfont
Note that in order to check (C.1)--(C.3) one is required to compute $v_{k,\max}$, $\forall k \in \mathbb{N}_{1:L}$ (as well as $v_{init}, v_{final}$). 
For the linear case, a $\delta$-ISS control Lyapunov function can be chosen as the error norm $V(x,y) = \|x-y\|$ (see \rsec{preliminaries}) and thus the set ${\cal B}_{\varepsilon} (\hat{x})$ is defined as a ball with center $\hat{x}$ and radius $\varepsilon$. Let $\overline{\cal B}_{\varepsilon} (\hat{x})$ be a polytope with ${\cal B}_{\varepsilon} (\hat{x})\subseteq \overline{\cal B}_{\varepsilon} (\hat{x})$. The set $\overline{\cal B}_{\varepsilon} (\hat{x})$ can be of any shape (e.g., square, hexagon) but is selected to include the ball ${\cal B}_{\varepsilon} (\hat{x})$. Since \textit{any} polygonal set ${\cal X}$ can be cell-decomposed as ${\cal X} = \bigcup^{N_x} _{n=1} {\cal X}_n$ ($N_x$ denotes the number of polytopes obtained by the decomposition), it holds that ${\cal B}_{\varepsilon} (\hat{x}) \subseteq {\cal X}$ if 
\begin{equation*}
\bigcup^{N_x} _{n=1} {\cal X}_{\varepsilon, n} = \overline{\cal B}_{\varepsilon} (\hat{x}), 
\end{equation*}
where ${\cal X}_{\varepsilon, n} = {\cal X}_n \cap \overline{\cal B}_{\varepsilon} (\hat{x})$. Since ${\cal X}_n$ and $\overline{\cal B}_{\varepsilon} (\hat{x})$ are both polytopes, ${\cal X}_{\varepsilon, n}$ is a polytope that can be computed by vertex operations. 
Thus, $v_{k, \max}$ can be obtained (under-approximated) by searching the maximum value of $\varepsilon$ with the property that the union of all ${\cal X}_{\varepsilon, n}$, $n \in \mathbb{N}_{1:N_x}$ is equal to $\overline{\cal B}_{\varepsilon}$. For general nonlinear systems, however, it may be difficult to compute $v_{k, \max}$ since the function $V(x, y)$ is in general not given by the error norm. In this case, we can make use of property in \req{eq3_valid_cost} in the following way. For a given $\varepsilon>0$, let ${\cal B}_{\alpha, \varepsilon} (\hat{x}_k)$ be the ball set characterized as ${\cal B}_{\alpha, \varepsilon} (\hat{x}_k)= \{x_k\in\mathbb{R}^n :  \|x - \hat{x}_k\| \leq \underline{\alpha}^{-1}  (\varepsilon)\}$, where the function $\underline{\alpha}$ is defined in \req{eq3_valid_cost}. Since $V(x_k, \hat{x}_k) \leq \varepsilon \implies \underline{\alpha} ( \|x_k - \hat{x}_k\|) \leq \varepsilon$ for any $\varepsilon>0$, it holds that ${\cal B}_{\varepsilon} (\hat{x}_k) \subseteq {\cal B}_{\alpha, \varepsilon} (\hat{x}_k)$ for any $\varepsilon>0$. Therefore, $v_{k, \max}$ can be under-approximated by searching the maximum value of $\varepsilon$ with ${\cal B}_{\alpha, \varepsilon} (\hat{x}_k) \subseteq {\cal X}$, which can be done by employing the same procedure as for the linear case described above.
\qedwhite 

\end{myrem}

\subsection{A naive approach to generate communication scheduling}\label{naive_approach_sec}

From \rlem{control_strategy_result}, if we assign $c_k$, $\forall k\in\mathbb{N}_{0:L}$ such that the conditions (C.1)--(C.4) are satisfied, the resulting state trajectory guarantees both safety and reachability. At the same time, we can achieve the communication reduction as much as possible by finding the minimum number of communication instants (i.e., $\min \sum^{L-1} _{k=0} c_k$). 
In what follows, we present ideas to achieve such communication scheduling. As a starting point, this subsection provides a \textit{naive} approach as a motivation to derive our proposed solution in the next subsections. 

\begin{figure}[t]
  \begin{center}
   \includegraphics[width=7cm]{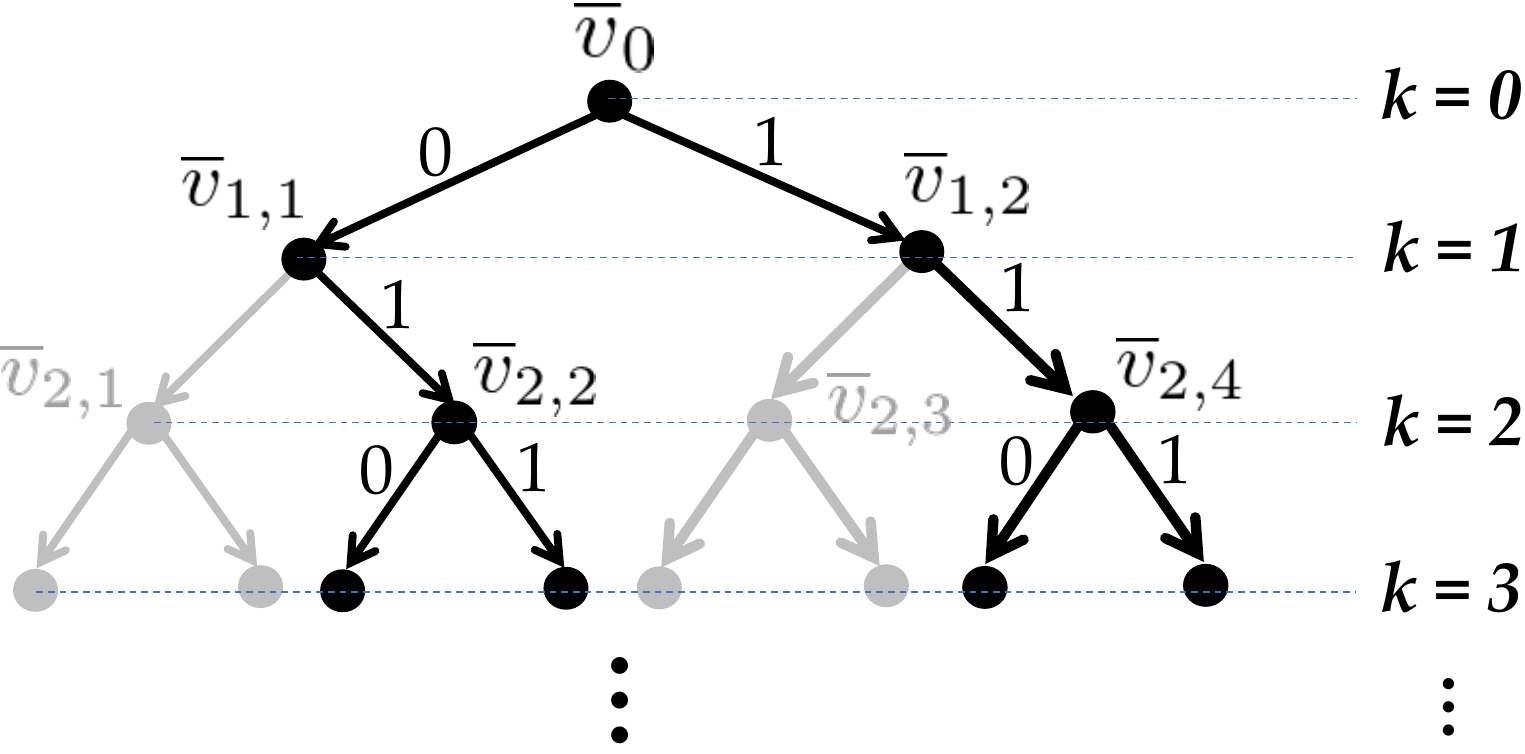}
   \caption{Illustration of generating a binary tree as a naive approach to find the communication scheduling. In the figure, gray nodes and edges are eliminated since the safety (reachability) conditions are violated. For instance, the node with $\overline{v}_{2,1}$ is eliminated since $\overline{v}_{2,1} \leq v_{2,\max}$ does not hold. } 
   \label{tree_example}
  \end{center}
 \end{figure}

Suppose that we obtain $\hat{x}_0, \hat{x}_1, \ldots, \hat{x}_L$ and $\hat{u}_0, \hat{u}_1, \ldots, \hat{u}_{L-1}$ as, respectively, the reference state and control trajectories as described in \rsec{offline_control_sec}, as well as $v_{k, \max}$, $\forall k\in {\mathbb{N}}_{0:L}$ as described in \rsec{error_propagation_sec} (see in particular \rrem{compute_vmax}). The most straightforward approach to obtain the desired communication scheduling $c_k$, $\forall k\in\mathbb{N}_{0:L}$ may be to consider all possible (feasible) communication schedulings satisfying all conditions (C.1)--(C.4), and then find the optimal one providing the minimum number of communication instants. The overview of this approach is illustrated in \rfig{tree_example}. As shown in the figure, the problem is translated into the construction of a \textit{binary tree} with the depth $L$. That is, starting from $\bar{v}_0 = v_{init}$ we first compute the next error for the two cases, according to whether the communication is given ($\bar{v}_{1,1} = g (\overline{v}_0, 1, w_{\max})$) or not given ($\bar{v}_{1,2} = g (\overline{v}_0, 0, w_{\max})$). For each case, we check if the corresponding error value is below the safety (or reachability if $L=1$) bounds, i.e., check if $\bar{v}_{1,1} \leq v_{1,\max}$ and $\bar{v}_{1,2} \leq v_{1,\max}$ (or $\bar{v}_{1,1} \leq v_{final}$ and $\bar{v}_{1,2} \leq v_{final}$ if $L=1$), as well as check if it satisfies the input constraint according to (C.4). If the conditions are satisfied, we add the node and the corresponding edge to the tree. If the condition does not hold, no further nodes and edges are added. The above procedure is iterated until the terminal time step $k = L$ is reached. Once the tree has been constructed, we seek the optimal path from the initial node to the ones at $k=L$, which provides the minimum number of communication instants. 

By using the above procedure, we can find a communication scheduling such that the conditions (C.1)--(C.4) are rigorously satisfied, and thus the resulting state trajectory achieves reachability and safety. However, the main drawback of the above procedure is its computational complexity; the number of total nodes for the constructed binary tree (as well as the number of feasible paths) for the worst case is $2^L$. Thus, the number of total nodes grows exponentially with respect to the total time steps, which makes the construction of a binary tree intractable. 

Motivated by the above issue, in the following subsections we provide an alternative approach that makes the problem of finding the communication scheduling \textit{scale well} with respect to the time step $L$. In particular, we propose to construct a \textit{symbolic error system}, which represents an {abstracted} behavior of the upper bound of the error propagation model in \req{vbar}. 
The symbolic system is \textit{abstracted} in the sense that it deals with only a finite number of non-negative reals that are selected from the domain of $\mathbb{R}$, in contrast to the original model in \req{vbar} that is defined over \textit{all} non-negative reals in $\mathbb{R}$. 
The symbolic system is constructed by making use of the monotonicity property in \req{monotone}, and it allows us to generate desired communication schedulings with the computational complexity being much more tractable than the naive approach. 


\subsection{Abstracting the behavior of the error propagation model}\label{abstraction_sec}
In this subsection we provide an approach to construct a symbolic model representing an abstracted behavior of \req{vbar}. To this end, we first \textit{partition} the domain of $\mathbb{R}$ into a {finite} number of segments. 
That is, for given $M\in\mathbb{N}_+$ with $M\geq2$ and $\overline{\nu} \in \mathbb{R}_+$, define a set of scalars $0< \nu_1 < \nu_2 < \cdots <\nu_{M}$ given by 
\begin{equation}\label{rhoseq}
\nu_m = \overline{\nu} m/ (M-1) , \ \ \forall m \in \mathbb{N}_{1:M-1}
\end{equation}
and $\nu_M = \infty$. Here, $\overline{\nu} = \nu_{M-1}$ represents the maximum finite value among the set of scalars $\nu_1, \ldots, \nu_{M-1}$, and $M$ represents the number of cells in the partition of the domain $\mathbb{R}$. How these parameters should be given is described later in this section. 
The sequence $\nu_1$, $\ldots$, $\nu_M$ represents the upper bound of the errors that will be treated to generate the communication schedulings. Namely, instead of using the original model \req{vbar} that is defined over \textit{all} non-negative reals in $\mathbb{R}$, we use here an {abstracted} model that consists of only a \textit{finite} number of positive reals $\{\nu_1, \ldots, \nu_M\}$. We refer to this abstracted model as the \textit{symbolic error system}, which is formally defined below: 

\begin{mydef}\label{transition_relation}
\normalfont
A \textit{symbolic error system} ${\cal T}$ is a tuple 
\begin{equation}
{\cal T} = ( S, \gamma, \delta), 
\end{equation}
where 
\begin{itemize}
\item $S = \{s_1, s_2, \ldots, s_M \}$ is a set of symbols; 
\item $\gamma : S \rightarrow \mathbb{R}$ is a labeling function given by $\gamma (s_i) = \nu_i$, $\forall i \in \mathbb{N}_{1:M}$; 
\item $\delta \ \subseteq S \times \{0,1\} \times S$ is a transition relation defined as follows: for given $s_i \in S$ and $c\in\{0, 1\}$, let $s_j \in S$ be given by 
\begin{align}
s_j &= \underset{s \in S}{{\rm arg\ min}}\ \gamma (s),\  {\rm s.t.}\ g (\gamma (s_i), c, w_{\max}) \leq \gamma (s).  \label{gcond} \hspace{-0.15cm} 
\end{align} 
Then, $(s_i, c, s_j) \in \delta$. \qedwhite
\end{itemize}
\end{mydef}

The system ${\cal T}$ mainly consists of a finite set of symbols $S = \{s_1, \ldots, s_M\}$ and their transitions defined in $\delta$. Each symbol $s_i \in S$ is related to the scalar $\nu_i$ through the mapping $\gamma$. In other words, the symbol $s_i$ indicates that the upper bound of the error is $\nu_i$. The transition relation $\delta$ is defined by solving \req{gcond}. Here, $g(\gamma(s_i), c, w_{\max})$ represents the upper bound of the error at the next time from the one associated with $s_i$ with (or without) the occurrence of communication indicated by $c$. Thus, the next symbol to be transitioned is determined by taking the closest upper bound to $g(\gamma(s_i), c, w_{\max})$. 
Roughly speaking, the transition indicates that the upper bound of the error becomes smaller (or larger) by providing (or not providing) the communication. 
For example, $(s_2, c, s_1) \in \delta$ with $c=1$ indicates that the upper bound of the error \textit{decreases} from $\nu_2$ to $\nu_1$ by the occurrence of communication. On the other hand, $(s_1, c, s_2) \in \delta$ with $c=0$ indicates that the upper bound of the error grows from $\nu_{1}$ to $\nu_{2}$ by not providing the communication. 
Note that the transition system ${\cal T}$ is \textit{deterministic} and \textit{non-blocking} (see, e.g., \cite{baier}), which means that for every $s \in S$ and $c \in \{0, 1\}$ there exists one transition from $s$. \\

\noindent
\textit{(Example~1):} 
Consider the linear system $x_{k+1} = f(x_k, u_k, w_k)  = A x_k + Bu_k +w_k$, where the pair $(A, B)$ is assumed to be stabilizable. As described in \rsec{preliminaries}, define $V(x,y) = \|x-y\|$ as the $\delta$-ISS control Lyapunov function with respect to $\kappa (x, y, u) = -K(x-y) + u$, where $K$ is given such that $A_{cl} = A-BK$ is Hurwitz. 
The corresponding upper bound of the error model is given by \req{vbar} with 
\begin{align}
g(v, c, \|w\|) = & c  \left(\sigma_{\max} (A_{cl}) + \|w\| \right) \notag \\
                  & + (1-c) \left(\sigma_{\max} (A) + \|w\| \right). 
\end{align}
Suppose that we have $\sigma_{\max} (A_{cl})  = 0.6$, $\sigma_{\max} (A)  = 1.2$, and $w_{\max} = 0.1$, and the parameters for the partition of the domain $\mathbb{R}$ are $\overline{\nu} = 5$, $M = 6$ (i.e., $\nu_m = m$, $m\in\mathbb{N}_{1:5}$ and $\nu_6 = \infty$).  The symbolic error system ${\cal T}$ is illustrated as a graph in \rfig{error_example}. For example, we have $g (\gamma (s_5), 1, w_{\max}) = 5 \cdot 0.6 + 0.1 = 3.1 < 4$ and so $(s_5, 1, s_4) \in \delta$. Also, we have $g (\gamma (s_5), 0, w_{\max}) = 5 \cdot 1.2 + 0.1 = 6.1 >5$ and so $(s_5, 0, s_6) \in \delta$. Note that we have $(s_2, 1, s_2) \in \delta$ since $g (\gamma (s_2), 1, w_{\max}) = 2 \cdot 0.6 + 0.1 = 1.3 < 2$. This means that the rate of decrease of the error is not large enough to transition to $s_1$. Thus, the state that the upper bound of the error is $1$ (i.e., $s_1$) can never be reached in ${\cal T}$. In the original system \req{vbar}, on the other hand, it holds that $\overline{v}_{k+1}\leq 1.0$ for all $\overline{v}_{k}\leq 1.5$, which means that there exists some $\overline{v}_k$ that can become smaller than $1$ at the next time. 
Thus, the symbolic system provides a somewhat conservative behavior in the sense that the error cannot be further reduced in comparison to the original system in \req{vbar}. 
\qedwhite

\begin{figure}[t]
  \begin{center}
   \includegraphics[width=6cm]{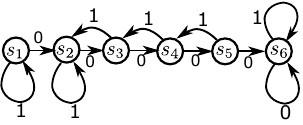}
   \caption{Symbolic error system obtained for Example~1. } 
   \label{error_example}
  \end{center}
 \end{figure}

\begin{myrem}[On the selection of $\overline{\nu}$]\label{select_param}
\normalfont
When constructing ${\cal T}$, one needs to define the parameters $\overline{\nu}$ and $M$ in order to characterize the partition of $\mathbb{R}$. In this remark, we provide a way to determine the parameter $\overline{\nu}$; regarding $M$, please refer to \rsec{computation_sec} for details as well as the illustrative analysis in the simulation section (\rsec{sim_nonlinear_sec}). Let us recall that we compute $v_{k,\max}$, $\forall k\in\mathbb{N}_{0:L}$ in \req{emax}.  Here, each $v_{k,\max}$ represents the \textit{maximum allowable error} that can be taken at $k$, which means that the error does not take values more than $v_{k,\max}$. Since $\overline{\nu}$ determines the maximum finite error value associated with the symbol in ${\cal T}$, one way to select $\overline{\nu}$ is to take the maximum value among $v_{k,\max}$,  $k\in\mathbb{N}_{0:L}$, i.e., 
\begin{equation}
\overline{\nu} = \underset{k\in\mathbb{N}_{0:L}}{\max}\ v_{k,\max}. 
\end{equation}
Note that since ${\cal X}$ is a bounded polygonal set, $v_{k,\max}$, $k\in\mathbb{N}_{0:L}$ as well as $\overline{\nu}$ are finite. 
\qedwhite 
\end{myrem}

\subsection{Generating the communication plan: an offline approach}\label{offline_sec}
Based on the symbolic system provided in the previous subsection, we now provide a framework to generate the desired communication scheduling. As described in \rsec{communication_sec}, we present both an offline and an online framework to generate the communication scheduling; in this subsection we derive the former approach. In order to assign a suitable communication scheduling in an offline manner, we further construct the following symbolic system: 
\begin{mydef}\label{transition_relation2}
\normalfont
A \textit{timed} symbolic error system is a tuple \begin{equation}
{\cal T}_A = ( S_A, \delta_A, s_{A,init}, S_{A,final}), 
\end{equation}
where 
\begin{itemize}
\item $S_A = S \times \mathbb{N}_{0:L}$ is a set of symbols; 
\item $\delta_A \subseteq S_A \times \{0,1\} \times S_A$ is a transition relation, and $((s_i, k), c, (s_j, k+1)) \in \delta_A$ for all $k\in\mathbb{N}_{0:L-1}$ if the following conditions hold: 
\renewcommand{\labelenumi}{(D.\arabic{enumi})}
\begin{enumerate}
\item $(s_i, c, s_j) \in \delta$; 
\item $\gamma (s_i) \leq v_{k,\max}$; 
\item $\gamma (s_j) \leq v_{k+1,\max}$;
\item $c =1 \implies \alpha_u \circ \underline{\alpha}^{-1} (\gamma (s_i)) + \rho_u (\|\hat{u}_k\|) \leq u_{\max}$;  
\end{enumerate}
\item $s_{A, init} = (s_{init}, 0) \in S_A$ is an initial state, where $s_{init} \in S$ is given by 
\begin{equation}
s_{init} = \underset{s\in S}{{\rm arg\ min}}\ \gamma (s),\  {\rm s.t.}\ {\cal X}_I \subseteq {\cal B}_{\gamma(s)} (\hat{x}_0); 
\end{equation}
\item $S_{A, final} \subset S_A$ is a set of terminal states given by 
\begin{align}
S_{A, final} &= \left\{ (s, L) \in S_A : {\cal B}_{\gamma(s)} (\hat{x}_L) \subseteq {\cal X}_F \right \}. \label{sfinal}
\end{align} 
\qedwhite
\end{itemize}
\end{mydef}

A timed symbolic system ${\cal T}_A$ is provided as an extension to the original one ${\cal T}$, in the sense that the time step domain in the symbolic states as well as an initial state and a set of terminal states are additionally defined in the symbolic system. As shown in the definition, a transition is allowed from $(s_i, k)$ to $(s_j, k+1)$ only if the conditions (D.1)--(D.4) are satisfied. Condition (D.1) indicates that the transition from $s_i$ to $s_j$ is allowed in the original transition system ${\cal T}$. Conditions (D.2), (D.3) indicate that the upper bounds of the error associated with $s_i$ and $s_j$ are respectively below the safety bounds $v_{k,\max}$ and $v_{k+1, \max}$ that are defined in \req{emax}. Condition (D.4) is essentially required in order for the control input to satisfy the input constraint. Overall, the procedure to derive $\delta_A$ is presented in \ralg{delta_construct_alg}. As shown in the algorithm, for each time step and for each transition in ${\cal T}$, we check if the conditions (D.2) -- (D.4) are satisfied. Only if the conditions are satisfied, we add the corresponding transition in $\delta_A$. As we will see in the next subsection, the complexity for implementing \ralg{delta_construct_alg} is shown to be \textit{linear} with respect to $L$, which is thus much more tractable than the construction of a binary tree for the naive approach described in \rsec{naive_approach_sec}. 

\begin{algorithm}[t]\label{delta_construct_alg}
{\small 
\SetKwInOut{Input}{input}
\SetKwInOut{Output}{output}
\Input{${\cal T}$ (transition system), $v_{k,\max}$, $\forall k\in\mathbb{N}_{0:L}$,\\ $\hat{u}_{k}$, $\forall k\in\mathbb{N}_{0:L-1}$}
\Output{$\delta_A$ (transition relation for ${\cal T}_A$)} 
set $\delta_A = \{ \}$ (initialization); \\
    \For{ $0\leq k \leq L-1$ }{
      \For{ each $(s_i, c, s_j) \in \delta$}{
      	 \If {(D.2)--(D.4) are satisfied} { 
      		$\delta_A \leftarrow \delta_A \cup ((s_i,k), c, (s_j, k+1))$;
      				}
    				}
           }
    \caption{Derivation of $\delta_A$. } 
    }
\end{algorithm}

We proceed to define the accepting run for ${\cal T}_A$ by recalling the standard definitions from automata theory (see, e.g., \cite{baier}): 
\begin{mydef}\label{accepting_def}
\normalfont
For given $c_k \in \{0,1\}$, $\forall k\in\mathbb{N}_{0:L-1}$, the sequence $(s(0), 0), (s(1), 1), \ldots, (s(L), L)$ is called an \textit{accepting run} for $c_k$, $k\in\mathbb{N}_{0:L-1}$ in ${\cal T}_A$, if $(s(0), 0) = s_{A, init}$, $((s(k), k), c_k, (s(k+1), k+1)) \in \delta_A$, $\forall k\in\mathbb{N}_{0:L-1}$, and $(s(L), L) \in S_{A, final}$. Moreover, $c_k \in \{0,1\}$, $\forall k\in\mathbb{N}_{0:L-1}$ is called \textit{accepted by ${\cal T}_A$}, if there exists an accepting run for $c_k$, $k\in\mathbb{N}_{0:L-1}$. \qedwhite 
\end{mydef}

Based on the above definitions, we obtain the following result: 
\begin{mythm}\label{main_result}
\normalfont
Suppose that the communication scheduling $c_k \in \{0, 1\}$, $k\in {\mathbb{N}}_{0:L-1}$ is designed such that it is accepted by ${\cal T}_A$. 
Then, for \textit{any} $x_0 \in {\cal X}_I$ and $w_0, w_1, \ldots, w_{L-1} \in {\cal W}$, the resulting state trajectory $x_0, x_1, \ldots, x_L$ is valid by applying the control strategy in \req{closed} and \req{open}. \qedwhite
\end{mythm}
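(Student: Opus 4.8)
The plan is to obtain \rthm{main_result} directly from \rlem{control_strategy_result} by showing that any communication scheduling accepted by ${\cal T}_A$ automatically satisfies the conditions (C.1)--(C.4). The bridge between the two statements is a \emph{domination} property: if $(s(0),0),(s(1),1),\ldots,(s(L),L)$ is an accepting run for $c_k$, $k\in\mathbb{N}_{0:L-1}$, and $\overline{v}_0,\ldots,\overline{v}_L$ are the reals generated by the recursion \req{vbar} with $\overline{v}_0 = v_{init}$, then $\overline{v}_k \leq \gamma(s(k))$ for all $k\in\mathbb{N}_{0:L}$. Once this is in place, everything reduces to \rlem{control_strategy_result}.

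First I would establish the base case. By \req{vinit}, $v_{init}$ is the smallest $\varepsilon$ with ${\cal X}_I \subseteq {\cal B}_\varepsilon(\hat{x}_0)$, whereas $\gamma(s_{init}) = \gamma(s(0))$ is, by definition of $s_{init}$, one particular such $\varepsilon$; hence $\overline{v}_0 = v_{init} \leq \gamma(s(0))$. Then I would run an induction on $k$ using the monotonicity of $g$ in its first argument \req{monotone}: assuming $\overline{v}_k \leq \gamma(s(k))$, the transition $(s(k), c_k, s(k+1)) \in \delta$ (which holds by (D.1)) gives, from \req{gcond}, that $g(\gamma(s(k)), c_k, w_{\max}) \leq \gamma(s(k+1))$, so $\overline{v}_{k+1} = g(\overline{v}_k, c_k, w_{\max}) \leq g(\gamma(s(k)), c_k, w_{\max}) \leq \gamma(s(k+1))$. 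This closes the induction.

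With the domination property in hand the remaining steps are routine. Condition (C.1) holds by construction of $\overline{v}_0$. For (C.2), each transition of $\delta_A$ ending at $(s(k),k)$ enforces (D.3), i.e. $\gamma(s(k)) \leq v_{k,\max}$, so $\overline{v}_k \leq \gamma(s(k)) \leq v_{k,\max}$ for all $k\in\mathbb{N}_{1:L}$. For (C.3), since $(s(L),L) \in S_{A,final}$ we have ${\cal B}_{\gamma(s(L))}(\hat{x}_L) \subseteq {\cal X}_F$, so by \req{vfinal}, $\gamma(s(L)) \leq v_{final}$, whence $\overline{v}_L \leq v_{final}$. For (C.4), whenever $c_k = 1$ condition (D.4) gives $\alpha_u \circ \underline{\alpha}^{-1}(\gamma(s(k))) + \rho_u(\|\hat{u}_k\|) \leq u_{\max}$, and since $\alpha_u\circ\underline{\alpha}^{-1}$ is increasing and $\overline{v}_k \leq \gamma(s(k))$, the same bound holds with $\overline{v}_k$ in place of $\gamma(s(k))$. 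Thus (C.1)--(C.4) all hold, and \rlem{control_strategy_result} yields validity of $x_0,\ldots,x_L$ for every $x_0\in{\cal X}_I$ and every disturbance sequence $w_0,\ldots,w_{L-1}\in{\cal W}$.

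The only delicate point I anticipate is making the domination step airtight: one must use that the transition relation \req{gcond} selects the \emph{least} symbol whose label upper-bounds $g(\gamma(s(k)),c_k,w_{\max})$, together with monotonicity of $g$, so that the finite-state symbolic evolution never under-estimates the real recursion \req{vbar}. A related observation worth stating is that an accepting run can never visit $s_M$ (whose label is $\infty$), since conditions (D.2)--(D.3) would otherwise be violated because every $v_{k,\max}$ is finite; this is handled automatically by the definition of $\delta_A$, but noting it reassures the reader that passing to the finite symbol set loses nothing needed for the argument.
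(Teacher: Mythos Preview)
Your proposal is correct and follows essentially the same route as the paper: the paper also introduces $\widetilde{v}_k := \gamma(s(k))$, proves the domination $\overline{v}_k \leq \widetilde{v}_k$ by induction using the monotonicity of $g$ together with the defining property \req{gcond} of $\delta$, and then reads off (C.1)--(C.4) from (D.2)--(D.4) and membership in $S_{A,final}$ before invoking \rlem{control_strategy_result}. Your additional remark that an accepting run can never pass through $s_M$ is a nice sanity check not stated explicitly in the paper.
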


\begin{proof}
Suppose that $c_k$, $k\in \mathbb{N}_{0:L-1}$ is accepted by ${\cal T}_A$, and let $(s(0), 0), (s(1), 1), \ldots, (s(L), L)$ be the accepting run for $c_k$, $k\in \mathbb{N}_{0:L-1}$. Based on $c_k$, $k\in \mathbb{N}_{0:L-1}$ and for a given $\overline{v}_0 = v_{init}$, let $\overline{v}_1$, $\cdots$, $\overline{v}_L$ be the sequence computed according to \req{vbar}. 
In addition, let $\widetilde{v}_k = \gamma (s(k))$, $\forall k\in\mathbb{N}_{0:L}$. From \req{vinit}, we obtain 
\begin{align*}
\overline{v}_0 &= \min \{\varepsilon\in\mathbb{R}: {\cal X}_I \subseteq {\cal B}_{\varepsilon} (\hat{x}_0)\} \\
                   &\leq \min \{\gamma (s) : s\in S, {\cal X}_I \subseteq {\cal B}_{\gamma (s)} (\hat{x}_0) \} \\
                   & = \widetilde{v}_0, 
\end{align*}
and thus $\overline{v}_0 \leq \widetilde{v}_0$. 
Since $(s(0), c_0, s(1)) \in \delta$, it holds from the definition of the transition relation in \rdef{transition_relation} that $g(\widetilde{v}_0, c_0, w_{\max}) \leq \widetilde{v}_1$. Thus, we obtain 
\begin{equation*}
\overline{v}_1 = g(\overline{v}_0, c_0, w_{\max}) \leq g(\widetilde{v}_0, c_0, w_{\max}) \leq \widetilde{v}_1, 
\end{equation*}
where we have used the monotonicity property of the function $g$ in \req{monotone}. Using the same procedure, we recursively obtain $\overline{v}_k \leq \widetilde{v}_k$, $\forall k\in \mathbb{N}_{0:L}$. Moreover, from conditions (D.2), (D.3) in \rdef{transition_relation2} it holds that $\widetilde{v}_k = \gamma (s(k)) \leq v_{k, \max}$, $\forall k\in \mathbb{N}_{0:L}$, and thus $\overline{v}_k \leq \widetilde{v}_k \leq v_{k, \max}$, $\forall k\in \mathbb{N}_{0:L}$. Also, from $s(L) \in S_{A, final}$ and \req{vfinal}, it holds that 
\begin{equation*}
\widetilde{v}_L \leq \max \{\varepsilon\in\mathbb{R}:  {\cal B}_{\varepsilon} (\hat{x}_L) \subseteq {\cal X}_F\} = v_{final}. 
\end{equation*}
Thus, we obtain $\overline{v}_L \leq \widetilde{v}_L \leq v_{final}$. From condition (D.4), $c_k = 1$ implies that 
\begin{align*}
\alpha_u \circ \underline{\alpha}^{-1} (\overline{v}_k) + \rho_u (\|\hat{u}_k\|) &\leq \alpha_u \circ \underline{\alpha}^{-1} (\widetilde{v}_k) + \rho_u (\|\hat{u}_k\|) \notag \\
&\leq u_{\max}, 
\end{align*}
for all $k\in\mathbb{N}_{0:L-1}$.
As a consequence, the sequence $\overline{v}_0$, $\overline{v}_1$, $\ldots$, $\overline{v}_L$ fulfills conditions (C.1)--(C.4) in \rlem{control_strategy_result}. 
Therefore, from \rlem{control_strategy_result}, by implementing the control strategy in \req{closed} and \req{open} according to $c_k$, $k\in \mathbb{N}_{0:L-1}$, the resulting state trajectory $x_0, \ldots, x_L$ becomes valid for any $x_0 \in {\cal X}_I$ and $w_0, \ldots, w_{L-1} \in {\cal W}$. The proof is complete. 
\end{proof}
\rthm{main_result} states that the existence of an accepting run of ${\cal T}_A$ implies that any state trajectory starting from ${\cal X}_I$ achieves reachability and safety. In order to provide online execution, we generate the communication scheduling in the following way. First, we assume $c_0 = 1$ (i.e., communication occurs at the initial time), which is required since the plant does not know any information about the control inputs to be applied at the initial time $k=0$. Second, in order to reduce the number of communication times as much as possible, we find the accepting run that leads to the smallest number of communication instants. That is, we find $c_k, k\in\mathbb{N}_{0:L-1}$ (with $c_0 = 1$) by solving the following problem: 
\begin{equation}
\min \sum^{L-1} _{k=0} c_k, 
\end{equation}
subject to $(s(0), 0) = s_{A, init}$, $(s(L), L) \in S_{A, final}$, and $((s(k), k), c_k, (s({k+1}), k+1)) \in \delta_A$, $\forall k\in\mathbb{N}_{0:L-1}$. The above optimal run can be obtained as follows. First, for each $s_{A, final} \in S_{A, final}$ we look for an accepting run from $s_{A, init}$ to $s_{A, final}$ by implementing standard graph search methodologies (e.g., Dijkstra algorithm \cite{lavalle}). Then, among all the accepting runs we further select the one with the minimum number of communication instants. 


\begin{algorithm}[t]\label{offline_alg}
{\small 
\SetKwInOut{Input}{input}
\SetKwInOut{Output}{output}
\Input{$\hat{x}_0, \ldots, \hat{x}_L$, $\hat{u}_0, \ldots, \hat{u}_{L-1}$ (reference state and control trajectories),\\ $c_{0}, c_1, \ldots, c_{L-1}$ (communication scheduling), } 
\Output{$x_0 x_1 x_2 \cdots x_L$ (state trajectory)}
    $k = 0$ (initialization); \\
    \While{ $k \leq L$ }{
    \If { $c_k = 0$} { \label{start}
     The plant applies ${u}_k$ (no communication is given) and $k : = k+1$;  
     } 
     \If { $c_k = 1$} {
     The controller receives $x_k$ from the plant; \\ 
     $u_k = \kappa (x_k, \hat{x}_k, \hat{u}_k)$; \label{update_input}\\
     \If {$k < L-1$ and $c_{k+1} = 0$} {
     $\ell^* _k = \mathsf{zeropref} (c_{k+1}, \ldots, c_{L-1})$; \label{zeropref_line} \\ 
     $u_{k+\ell} = \hat{u}_{k+\ell},\ \forall \ell \in \mathbb{N}_{1:\ell^* _k}$; 
     }
     The controller transmits $u_k, \ldots, u_{k+\ell^* _k}$ to the plant; \\
     The plant applies $u_k$ and $k : = k+1$; 
      } \label{finish}
    }
    \caption{Implementation of offline communication scheduling.}
	     }
\end{algorithm}

In summary, the implementation algorithm based on the offline communication scheduling obtained above is illustrated in \ralg{offline_alg}. In the algorithm, the function $\mathsf{zeropref} : \{0, 1\}^{L-k-1} \rightarrow \mathbb{N}$ (\rline{zeropref_line}) is defined by 
\begin{equation}
\mathsf{zeropref} (c_{k+1}, \ldots, c_{L-1}) = {\max}\ \ell,\  {\rm s.t.}\sum^{\ell} _{\ell'=1} c_{k+\ell'} = 0,
\end{equation}
i.e., the function outputs the number of zero-elements appearing at the beginning of the sequence $c_{k+1}, \ldots, c_{L-1}$. As shown in the algorithm, for each communication time the controller {updates} the current control input (\rline{update_input}) by utilizing $x_k$ and a set of control inputs for the non-communication time steps 
(i.e., $u_{k}, \ldots, u_{k+\ell^* _k}$). Then, the control inputs are transmitted to the plant and no communication is given until the next communication time. This procedure is iterated until the terminal step $k=L$ is reached. From \rthm{main_result}, it is shown that any state trajectory starting from $x_0 \in {\cal X}_I$ becomes valid by applying \ralg{offline_alg}. 



\subsection{Discussion on the computational complexity}\label{computation_sec}
Recall that in the naive approach presented in \rsec{error_propagation_sec}, the number of total nodes and edges of a binary tree for the worst case are \textit{exponential} with respect to the time step $L$. Thus, the binary tree construction can be intractable especially for a large value of $L$. 
In the proposed approach, on the other hand, we aim at constructing the timed symbolic error system ${\cal T}_A$, in which the complexity heavily depends on the implementation of \ralg{delta_construct_alg} (i.e., the derivation of $\delta_A$). 
In the algorithm, it is required to check the conditions (D.2)--(D.4) for each $k\in\mathbb{N}_{0:L-1}$ and each transition in ${\cal T}$. Since the total number of transitions in ${\cal T}$ is $2M$ (since there exist two transitions from each $s \in S$), the total number of iterations in \ralg{delta_construct_alg} is $2ML$. 
Since the parameter $M$ is determined independently from $L$,  the construction of ${\cal T}_A$ is \textit{linear} with respect to $L$ and is thus much more tractable than the naive approach. 
Once ${\cal T}_A$ is constructed, it is required to find an offline communication scheduling by finding the accepting run with the minimum number of communication instants. The complexity to find an accepting run from $s_{A,init}$ to each $s_{A,final} \in S_{A, final}$ is $O(ML \ln ML +2ML )$, if we apply the Dijkstra algorithm \cite{lavalle}. Thus, the complexity of finding the optimal communication scheduling is $O(M_f (ML \ln (ML) + 2ML) )$, where $M_f$ denotes the total number of symbols in $S_{A, final}$. 

Note that the total number of iterations in \ralg{delta_construct_alg} as well as complexity to find the communication scheduling also depend on the tuning parameter $M$, which represents the number of partitions of the domain $\mathbb{R}$. Here, if $M$ is selected smaller, we can reduce the complexity to obtain both ${\cal T}_A$ and the offline communication scheduling. However, if $M$ is selected smaller and the partition of $\mathbb{R}$ becomes sparser, the corresponding transition system ${\cal T}_A$ may not approximate precisely enough the error propagation model in \req{vbar}. More specifically, it is possible that the symbol can transition to another one associated with a much larger value than the original behavior in \req{vbar} (i.e., for some $(s_i, c, s_j) \in \delta$, we may have $g (\gamma (s_i), c, w_{\max}) \ll \gamma (s_j)$). 
Due to such mismatch, the offline communication scheduling tends to be more conservative as $M$ is chosen smaller, i.e., the communication frequency tends to be higher as the partition becomes sparser. 
Therefore, the parameter $M$ should be carefully chosen by taking into account the trade-off between the computational complexity and the conservativeness of the communication scheduling.

\subsection{Generating communication plan: an online approach}\label{online_sec}
In the previous subsection we provided an offline framework to generate the communication strategy. This approach is beneficial in terms of the computational load, since communication scheduling does not need to be generated during the online implementation. 
However, the drawback of this approach may be that the communication scheduling is conservative; the generated scheduling may require a higher number of communications than the one that is \textit{actually} (minimally) required to guarantee reachability and safety. This conservativeness is due to the fact that the error between the actual state and the reference during the online implementation may be much smaller than the one assumed by the optimal run generated offline. 
For example, suppose that the accepting run includes the symbol $(s(k), k) \in S_A$ for some $k\in \mathbb{N}_{0:L}$. This implies, from the proof of \rthm{main_result}, that the error between the actual state and the reference is below $\gamma (s(k))$, i.e., $v_k = V(x_k, \hat{x}_k) \leq \gamma (s(k))$. However, since the accepting run is obtained offline, $v_k$ can be much smaller than the upper bound $\gamma (s(k))$. This means that there may exist another $(s' (k), k) \in S_A$ such that $v_k \leq \gamma (s' (k)) < \gamma (s (k))$, i.e., there can exist a symbol that provides a more rigorous upper bound for $v_k$. In this case, there may exist another run from $(s' (k), k)$ providing a smaller number of communication instants than the one from $(s (k), k)$. 

\begin{algorithm}[t]\label{online_alg}
\small{ 
\SetKwInOut{Input}{input}
\SetKwInOut{Output}{output}
\Input{$\hat{x}_0, \ldots, \hat{x}_L$, $\hat{u}_0, \ldots, \hat{u}_{L-1}$ (reference state and control trajectories),}
\Output{$c_{0}, c_1, \ldots, c_{L-1}$ (communication scheduling) \\ 
$x_0 x_1 x_2 \cdots x_L$ (state trajectory)}
$c_0 = 1$, $k = 0$ (initialization); \\
    \For{ $k \leq L$ }{
    \If { $c_k = 0$} { \label{start2}
         The plant applies ${u}_k$ (no communication is given) and $k : = k+1$;  
     } 
     \If { $c_k = 1$} {
     The controller receives $x_k$ from the plant; \\ 
     Let $s (k) = \mathsf{sym} (x_k)$ and \label{com_update_line}
     \begin{equation}\label{optcom_eq}
     c^* _{k+1|k}, \ldots, c^* _{L-1|k} =\mathsf{optcom}(s(k));
     \end{equation}
     
     $u_k = \kappa (x_k, \hat{x}_k, \hat{u}_k)$; \label{update_cur_control_line} \\
     \uIf {$k < L-1$ and $c^* _{k+1|k} = 0$} {
      $\ell^* _k = \mathsf{zeropref} (c^* _{k+1|k}, \ldots, c^* _{L-1|k})$;\\
      $u_{k+\ell} = \hat{u}_{k+\ell},\ \forall \ell \in \mathbb{N}_{1:\ell^* _k}$;  \label{control_noncom_line}\\
      $c_{k+\ell} = 0$, $\forall \ell \in \mathbb{N}_{1:\ell^* _k}$; \\
     \If {$k + \ell^* _k < L-1$} {
      $c_{k+\ell^* _k + 1} = 1$; 
     }
     }
     \Else{$c_{k+1} = 1$;}
     The controller transmits $u_k, \ldots, u_{k+\ell^* _k}$ to the plant; \\
     The plant applies $u_k$ and set $k : = k+1$; 
      } \label{finish2} 
    }
    }
    \caption{Implementation of an online communication scheduling.} 
\end{algorithm}

Motivated by the above, this subsection provides an online communication scheduling algorithm that has the potential to provide a less conservative result than the offline communication case. 
The proposed strategy is illustrated in \ralg{online_alg}. 
In the algorithm, the function $\mathsf{sym}: {\cal X}\rightarrow S$ (\rline{com_update_line}) is defined by 
\begin{equation}\label{identify_current}
\mathsf{sym} (x_k) = \underset{s \in S}{{\rm arg\ min}}\ \gamma (s), \ \ {\rm s.t.}\ V(x_k, \hat{x}_k ) \leq \gamma (s). 
\end{equation}
That is, the function outputs the symbol associated with the \textit{closest} upper bound to $V(x_k, \hat{x}_k)$. Moreover, the function $\mathsf{optcom} : S \rightarrow \{0, 1\}^{L-k-1}$ outputs the optimal communication scheduling by finding an appropriate sequence from $(s(k), k)$ to the symbol in $S_{A,final}$ with the minimum number of communication instants, i.e., 
\begin{equation}\label{opt_com_func}
\mathsf{optcom}(s (k)) = \underset{c_{k+1|k}\cdots c_{L-1|k}}{{\rm arg\ min}} \sum^{L-k-1} _{\ell=1} c _{k+\ell|k}, 
\end{equation}
subject to $((s(k+\ell), k+\ell), c_{k+\ell|k}, (s ({k+\ell+1}), k+\ell+1)) \in \delta_A$, $\forall \ell \in \mathbb{N}_{0:L-k-1}$, with $c_{k|k} = c_k$ and $(s({L}), L) \in S_{A,final}$. 

As shown in the algorithm, for each communication time the controller identifies the current symbol associated with the closest upper bound to $V(x_k, \hat{x}_k)$, aiming at reducing the conservativeness with respect to the offline approach. Then, based on the current symbol it updates the optimal communication scheduling by finding the smallest number of communication instants (\rline{com_update_line}). 
Then, the controller computes a current control input (\rline{update_cur_control_line}) as well as a set of control inputs for the non-communication time steps (\rline{control_noncom_line}) and transmits them to the plant. Note that as shown in the algorithm, the communication is given in a self-triggered manner \cite{heemels2012a}, in which for each communication time the controller determines the next communication time (if it exists) based on the state information $x_k$. 

{
In addition to the communication reduction, another advantage of employing the online approach (rather than the offline approach) is that it can potentially handle larger size of disturbances. This is due to the fact that \ralg{online_alg} looks for an accepting run at $k=0$ for a \textit{given} initial state $x_0 \in {\cal X}_I$, while the offline approach looks for an accepting run, before implementing \ralg{offline_alg}, such that \textit{every} initial state $x_0 \in {\cal X}_I$ should lead to a valid trajectory. Mathematically, we have $\gamma (s(0)) \leq \gamma (s_{init})$ with $s(0) = \mathsf{sym} (x_0)$, meaning that the initial error is over-estimated when the offline approach is employed. 
Thus, an accepting run from $s(0)$ is more likely to be found instead of $s_{init}$ under the same values of $w_{\max}$, or, in other words, a larger size of disturbance is allowed by applying the online approach. 
The above observation will be also illustrated in the simulation section: see \rsec{example_sec} for the linear case. 
}

With a slight abuse of \rdef{accepting_def}, we define the accepting run as follows. 
For given $k \in\mathbb{N}_{0:L-1}$, $x_k \in {\cal X}$, and $c_{k+\ell|k} \in\{0,1\}$, $\forall \ell \in\mathbb{N}_{0:L-k-1}$, the sequence $(s(k), k), (s({k+1}), k+1), \ldots, (s({L}), L)$ is called an \textit{accepting run} for $c_{k+\ell|k}$, $\ell \in\mathbb{N}_{0:L-k-1}$, if $s(k) = \mathsf{sym} (x_k)$,  $((s ({k+\ell}), k+\ell), c_{k+\ell|k}, (s ({k+\ell+1}), k+\ell+1)) \in \delta_A$, $\forall \ell \in \mathbb{N}_{0:L-k-1}$, and $(s ({L}), L) \in S_{A,final}$. 
Moreover, $c_{k+\ell|k} \in\{0,1\}$, $\forall \ell \in\mathbb{N}_{0:L-k-1}$ is called \textit{accepted by ${\cal T}_A$}, if there exists an accepting run for $c_{k+\ell|k}$, $\forall \ell \in\mathbb{N}_{0:L-k-1}$. 
For the online communication approach, we obtain the following result:

\begin{mythm}\label{second_result}
\normalfont
For a given $x_0 \in {\cal X}_I$, suppose that \ralg{online_alg} is implemented. Moreover, suppose that at the initial time $k=0$ there exists $c_{\ell|0}\in \{0, 1\}$, $\ell \in\mathbb{N}_{0:L-1}$ with $c_{0|0} = c_0 = 1$, such that it is accepted by ${\cal T}_A$. 
Then, for any $w_0, \ldots, w_{L-1} \in {\cal W}$, the following holds: 
\renewcommand{\labelenumi}{(E.\arabic{enumi})}
\begin{enumerate}
\item (Feasibility): for any $k\in\mathbb{N}_{1:L-1}$ with $c_k = 1$, there exists $c_{k+\ell|k}\in \{0, 1\}$, $\ell \in\mathbb{N}_{0:L-k-1}$, 
such that it is accepted by ${\cal T}_A$. 

\smallskip
\item (Validity): the resulting state trajectory is valid. \qedwhite
\end{enumerate}
\end{mythm}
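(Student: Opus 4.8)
The plan is to run a \emph{recursive feasibility} argument in the spirit of model predictive control, and then read off validity from the per-step error bounds, exactly as in the proof of \rlem{control_strategy_result}. The first thing I would isolate is the single structural fact about ${\cal T}_A$ that drives everything — a \emph{domination} property: if $s, s' \in S$ with $\gamma(s') \le \gamma(s)$ and there is an accepting run from $(s,k)$, then there is an accepting run from $(s',k)$. This follows from the monotonicity of $g$ in \req{monotone}: whenever $(s_i, c, s_j) \in \delta$ and $\gamma(s_i') \le \gamma(s_i)$, then $g(\gamma(s_i'), c, w_{\max}) \le g(\gamma(s_i), c, w_{\max}) \le \gamma(s_j)$, so the symbol $s_j'$ picked by \req{gcond} for $(s_i', c)$ satisfies $\gamma(s_j') \le \gamma(s_j)$; moreover (D.1)--(D.4) and membership in $S_{A,final}$ are all preserved when a symbol is replaced by a dominated one, since $\gamma$ and $\alpha_u \circ \underline{\alpha}^{-1}$ are nondecreasing and ${\cal B}_\varepsilon(\hat x)$ is nondecreasing in $\varepsilon$. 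Unrolling this along the given run produces a dominated accepting run from $(s', k)$.

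Next I would prove (E.1) by induction over the communication instants $0 = k_0 < k_1 < k_2 < \cdots$ generated by \ralg{online_alg}. The base case is the hypothesis of the theorem: an accepting run exists from $(s(0), 0)$ with $s(0) = \mathsf{sym}(x_0)$. For the inductive step, assume an accepting run exists from $(s(k_i), k_i)$; then $\mathsf{optcom}(s(k_i))$ is well-defined and returns a run $(s(k_i), k_i), \ldots, (s(L), L)$ through $\delta_A$, and the algorithm executes $u_{k_i} = \kappa(x_{k_i}, \hat x_{k_i}, \hat u_{k_i})$ followed by $u_{k_i+\ell} = \hat u_{k_i+\ell}$ for $\ell = 1, \ldots, \ell^*_{k_i}$, which is precisely the control pattern matching $c^*_{k_i+\ell|k_i}$ for $\ell = 0, \ldots, \ell^*_{k_i}$ (recall $c^*_{k_i|k_i} = 1$ and the next $\ell^*_{k_i}$ entries are $0$ by $\mathsf{zeropref}$). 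Applying \req{vnext} at each of these steps together with the monotonicity of $g$ — the same recursion as in the proof of \rlem{control_strategy_result} — and using $(s(j), c^*_{j|k_i}, s(j+1)) \in \delta$ via (D.1) and \req{gcond}, I obtain inductively $v_j = V(x_j, \hat x_j) \le \gamma(s(j))$ for $j = k_i, \ldots, k_{i+1}$, where $k_{i+1} = k_i + \ell^*_{k_i} + 1$; the base $v_{k_i} \le \gamma(s(k_i))$ is just the definition of $\mathsf{sym}$ in \req{identify_current}. In particular $v_{k_{i+1}} \le \gamma(s(k_{i+1})) < \infty$, hence $\gamma(\mathsf{sym}(x_{k_{i+1}})) \le \gamma(s(k_{i+1}))$; since the tail $(s(k_{i+1}), k_{i+1}), \ldots, (s(L), L)$ is an accepting run from $(s(k_{i+1}), k_{i+1})$, the domination property supplies an accepting run from $(\mathsf{sym}(x_{k_{i+1}}), k_{i+1})$, closing the induction. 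The recursion terminates because the $k_i$ strictly increase and are bounded by $L-1$.

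For (E.2): the propagation bound just derived shows that at \emph{every} step $k$ the currently active plan assigns a symbol $\tilde s(k)$ with $v_k \le \gamma(\tilde s(k))$, and (D.2)--(D.3) force $\gamma(\tilde s(k)) \le v_{k,\max}$, so $x_k \in {\cal B}_{v_{k,\max}}(\hat x_k) \subseteq {\cal X}$ by \req{safety_eq} (safety). At the last communication instant $k_N$ one necessarily has $k_N + \ell^*_{k_N} = L-1$ (otherwise the algorithm would schedule a later communication at $k_N + \ell^*_{k_N} + 1 \le L-1$), so the remaining plan entries are all $0$ and the run ends in $(s(L), L) \in S_{A,final}$; propagating to step $L$ gives $v_L \le \gamma(s(L))$, whence $x_L \in {\cal B}_{\gamma(s(L))}(\hat x_L) \subseteq {\cal X}_F$ by \req{sfinal} (reachability). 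Finally, at a communication instant $c_k = 1$, combining \req{control_input_upper}, the bound $v_k \le \gamma(s(k))$ and (D.4) yields $\|u_k\| \le \alpha_u \circ \underline{\alpha}^{-1}(\gamma(s(k))) + \rho_u(\|\hat u_k\|) \le u_{\max}$, and at a non-communication instant $u_k = \hat u_k \in {\cal U}$; so the dynamics, safety and reachability requirements all hold and the trajectory is valid.

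The step I expect to be the main obstacle is the recursive-feasibility part of (E.1): one must carefully handle the self-triggered re-indexing and verify that the \emph{actual} error $v_{k_{i+1}}$ at the next re-planning instant is dominated by the value $\gamma(s(k_{i+1}))$ that the \emph{previous} plan predicted there. The domination lemma for ${\cal T}_A$ together with the $g$-monotonicity propagation (essentially the calculation already carried out in the proof of \rlem{control_strategy_result}) is exactly what makes this go through; the few boundary situations near $k = L$ (a communication exactly at $k = L-1$ with an empty remaining plan) need a little care but are routine.
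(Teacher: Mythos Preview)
Your proposal is correct and follows essentially the same recursive-feasibility argument as the paper: the paper also proceeds by induction over the communication instants $k_0 < k_1 < \cdots$, propagates $v_k \le \gamma(s^*(k))$ via the monotonicity of $g$, takes the tail $\hat c_{k_1+\ell|k_1} = c^*_{k_1+\ell|0}$ of the previous optimal scheduling as a feasible candidate at the next instant, and verifies (D.1)--(D.4) and the terminal condition for the dominated symbol $\hat s(k_1)=\mathsf{sym}(x_{k_1})$. Your isolation of the domination property as a standalone lemma is a slightly cleaner packaging of exactly the inline verification the paper carries out, and your treatment of (E.2) matches the Appendix proof.
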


The first result (E.1) means that the existence of an accepting run at $k=0$ implies the existence of an accepting run for all the communication time steps afterwards. This property is important, since if \textit{no} accepting runs {were} present for some $k$, the controller would not find a suitable communication scheduling according to \req{optcom_eq}. The second result (E.2) shows the validity of the state trajectory by applying \ralg{online_alg}. 
\begin{proof}
Here we provide a proof only for (E.1). 
The proof for (E.2) is similar to the one of \rthm{main_result} and is provided in Appendix. The proof for (E.1) is given by induction. 
From the assumption, at $k=0$ there exists a communication scheduling such that it is accepted by ${\cal T}_A$. Thus, the controller can find the optimal communication scheduling according to \req{optcom_eq}. Let $c^* _{\ell|0}$, $\ell\in\mathbb{N}_{0:L-1}$ with $c^* _{0|0} = c_0 = 1$ be the optimal communication scheduling obtained at $k=0$ and 
\begin{equation}\label{path_initial}
(s^* ({0}), 0), (s^* ({1}), 1), \ldots, (s^* ({L}), L)
\end{equation}
be the corresponding (accepting) run for $c^* _{\ell|0}$, $\ell\in\mathbb{N}_{0:L-1}$ with $s^* (0) = s (0) = {\mathsf{sym} (x_0)}$. Note that since \req{path_initial} is accepting, we have $(s^* (L), L) \in S_{A, final}$. 
Since $s^* (0)$ is given by solving \req{identify_current}, it holds that $v_0 \leq \gamma(s^* ({0}))$. Thus, we obtain 
\begin{align*}
v_{1} \leq g(v_0, c^* _{0|0}, w_{\max}) \leq g(\gamma(s^* ({0})), c^* _{0|0}, w_{\max}) \leq \gamma(s^* (1)), 
\end{align*}
where $v_1 = V(x_1, \hat{x}_1)$ and the last inequality follows from the fact that $(s^* ({0}), c^* _{0|0}, s^* ({1})) \in \delta$ and $\delta$ is given according to \rdef{transition_relation}. 
Thus, we obtain $v_1\leq \gamma(s^* (1))$. 
By using the same procedure, we recursively obtain 
\begin{equation}\label{vllupper}
v_{k} \leq \gamma(s^* ({k})),\ \forall k \in\mathbb{N}_{0: k_1}, 
\end{equation} 
where $k_1 >0$ denotes the next communication time from the initial time $k=0$ (i.e., $c^* _{1|0} = \cdots = c^* _{k_1-1|0} = 0$, $c^* _{k_1|0} = 1$). 
Now, consider the next communication time $k_1$, and let ${s} ({k_1}) = \mathsf{sym} (x_{k_1})$. In what follows, we show that there exists a communication scheduling for $k_1$, such that it is accepted by ${\cal T}_A$. 
Let $\hat{c}_{k_1+\ell|k_1}$, $\ell \in\mathbb{N}_{0:L-k_1-1}$ be the communication scheduling given by 
\begin{equation}\label{chat_eq}
\hat{c}_{k_1+\ell|k_1} = c^* _{k_1+\ell|0}, \ \forall \ell \in\mathbb{N}_{0:L-k_1-1}. 
\end{equation}
Here, $\hat{c}_{k_1 + \ell|k_1}$, $\ell \in\mathbb{N}_{0:L-k_1}$ represents a \textit{candidate} communication scheduling for $k_1$, such that it is accepted by ${\cal T}_A$, or in other words, there exists an accepting run for $\hat{c}_{k_1 + \ell|k_1}$, $\ell \in\mathbb{N}_{0:L-k_1}$. Let $\hat{s} (k_1 +\ell) \in S$, $\ell \in\mathbb{N}_{0:L-k_1}$ be such that $(\hat{s}(k_1+\ell), \hat{c}_{k_1+\ell|k_1}, \hat{s}(k_1 + \ell + 1)) \in \delta$, $\forall \ell \in \mathbb{N}_{0:L-k_1-1}$ with $\hat{s} (k_1) = s(k_1)$. Note that since ${\cal T}$ is deterministic and nonblocking (see \rsec{abstraction_sec}), the sequence $\hat{s} (k_1 +\ell) \in S$, $\ell \in\mathbb{N}_{0:L-k_1}$ is uniquely determined by $\hat{c}_{k_1 + \ell|k_1}$, $\ell \in\mathbb{N}_{0:L-k_1}$. We now show that 
$(\hat{s}(k_1), k_1), (\hat{s}(k_1+1), k_1+1), \ldots, (\hat{s}(L), L)$ is an accepting run for $\hat{c}_{k_1 + \ell|k_1}$, $\ell \in\mathbb{N}_{0:L-k_1}$. 
Let us first show $((\hat{s}(k_1), k_1), \hat{c}_{k_1|k_1}, (\hat{s}(k_1 + 1), k_1 + 1) \in \delta_A$ (with $\hat{s}(k_1) = s (k_1)$). Condition (D.1) in \rdef{transition_relation2} trivially holds since $(\hat{s}(k_1), \hat{c}_{k_1|k_1}, \hat{s}(k_1 + 1)) \in \delta$. 
Moreover, since $s ({k_1}) = \mathsf{sym}(x(k_1))$ and $v_{k_1} \leq \gamma (s^* ({k_1}))$ (see \req{vllupper}), we obtain 
\begin{align}
\gamma (\hat{s} ({k_1})) &= {\min}\left\{\gamma (s) : s\in S,\ V(x_{k_1},\hat{x}_{k_1}) \leq \gamma (s) \right\} \notag \\
&\leq \gamma (s^* ({k_1})). 
\end{align} 
Since $((s^* ({k_1}), k_1), c^* _{k_1|0}, (s^* ({k_1+1}), k_1 + 1) \in \delta_A$, we obtain $\gamma (\hat{s} ({k_1}))\leq \gamma (s^* ({k_1})) \leq v_{k_1, \max}$ and thus condition (D.2) in \rdef{transition_relation2} holds. 
Moreover, due to the monotonicity property of $g$ and $\hat{c}_{k_1|k_1} = c^* _{k_1|0}$, we obtain 
\begin{equation}\label{eq1}
g(\gamma (\hat{s} ({k_1})), \hat{c}_{k_1|k_1}, w_{\max}) \leq g(\gamma (s^* ({k_1})), c^* _{k_1|0}, w_{\max}).
\end{equation} 
Since $(s^* ({k_1}), c^* _{k_1|0}, s^* ({k_1+1}))$ $\in \delta$, it follows from \req{gcond} in \rdef{transition_relation} that $g(\gamma (s^* ({k_1})), c^* _{k_1|0}, w_{\max}) \leq \gamma (s^* ({k_1} + 1))$, so that we have $g(\gamma (\hat{s} ({k_1})), \hat{c}_{k_1|k_1}, w_{\max}) \leq \gamma (s^* ({k_1} + 1))$. Thus, we obtain
\begin{align}
\gamma (\hat{s} (k_1 + 1))  &=  \min \{\gamma(s): s\in S, \notag \\ 
&\ \ \ \ \ \ \ \ \ \ g (\gamma (\hat{s}(k_1)), \hat{c}_{k_1|k_1}, w_{\max}) \leq \gamma (s)\}  \notag \\
& \leq \gamma (s^* ({k_1} + 1)) \leq {v}_{k_1+1, \max},
\end{align} 
where the last inequality follows from the fact that $((s^* ({k_1}), k_1), c^* _{k_1|0}, (s^* ({k_1+1}), k_1 + 1) \in \delta_A$. Thus, condition (D.3) holds. 
Finally, noting that $\hat{c} _{k_1|k_1} =1$ and
\begin{align}
\alpha_u \circ \underline{\alpha}^{-1} & (\gamma (\hat{s}({k_1}))) + \rho (\hat{u}_{k_1}) \notag \\
&\leq \alpha_u \circ \underline{\alpha}^{-1} (\gamma ({s}^* ({k_1})))+ \rho (\hat{u}_{k_1}) \notag \\
& \leq u_{\max}, 
\end{align} 
it is shown that condition (D.4) in \rdef{transition_relation2} holds. 
Therefore, we have $((\hat{s}(k_1), k_1), \hat{c}_{k_1|k_1}, (\hat{s}(k_1 + 1), k_1 + 1) \in \delta_A$. By using the same procedure as above, we recursively obtain 
\begin{equation}
\gamma (\hat{s} (k_1 + \ell)) \leq \gamma ({s}^* (k_1 +\ell)) \leq {v}_{k_1+\ell, \max},
\end{equation} 
for all $\ell \in \mathbb{N}_{0:L-k_1}$, and it is shown that $((s(k_1+\ell), k_1+\ell), \hat{c}_{k_1+\ell|k_1}, (\hat{s}(k_1 + \ell+1), k_1 + \ell + 1) \in \delta_A$, $\forall \ell \in \mathbb{N}_{0:L-k_1-1}$. 
Since $(s^* ({L}), L) \in S_{A, final}$ and $\gamma (\hat{s} ({L})) \leq \gamma (s^* ({L}))$, we then obtain ${\cal B}_{\gamma(\hat{s} ({L}))} (\hat{x}_L) \subseteq {\cal B}_{\gamma({s}^* ({L}))} (\hat{x}_L) \subseteq {\cal X}_F$. Thus, it holds that $(\hat{s} ({L}), L ) \in S_{A, final}$. Therefore, it is shown that $(\hat{s}(k_1), k_1), (\hat{s}(k_1+1), k_1+1), \ldots, (\hat{s}(L), L)$ is an accepting run for $\hat{c}_{k_1 + \ell|k_1}$, $\ell \in\mathbb{N}_{0:L-k_1}$. 

Since there exists an accepting run for $k_1$, the controller can find the optimal communication scheduling at $k_1$ according to \req{optcom_eq}. Let $c^* _{k_1+\ell|k_1}$, $\forall \ell \in\mathbb{N}_{0:L-k_1-1}$ be the optimal communication scheduling obtained at $k_1$, and let $k_2 > k_1$ be the next communication time from $k_1$ (i.e., $c^* _{k_1+1|k_1} = 0, \ldots, c^* _{k_2-1|k_1} = 0$ and $c^* _{k_2|k_1} = 1$). Let $s(k_2) = \mathsf{sym} (x_{k_2})$ and 
\begin{equation}
\hat{c}_{k_2+\ell|k_2} = c^* _{k_2+\ell|k_1}, \ \forall \ell \in\mathbb{N}_{0:L-k_2-1}
\end{equation}
be a candidate communication scheduling for $k_2$. With a slight abuse of notation, let $\hat{s} (k_2 +\ell) \in S$, $\ell \in\mathbb{N}_{0:L-k_2}$ be such that $(\hat{s}(k_2+\ell), \hat{c}_{k_2+\ell|k_2}, \hat{s}(k_2 + \ell + 1)) \in \delta$, $\forall \ell \in \mathbb{N}_{0:L-k_2-1}$ with $\hat{s} (k_2) = s(k_2)$. By using exactly the same procedure as for the case $k_1$ described above, it follows that 
\begin{equation}\label{pathk2}
(\hat{s} ({k_2}), k_2), (\hat{s} ({k_2 + 1}), k_2 +2), \ldots, (\hat{s} ({L}), L)
\end{equation}
is an accepting run for $\hat{c}_{k_2+\ell|k_2}$, $\ell \in\mathbb{N}_{0:L-k_2-1}$ with $\hat{s} ({k_2}) = s(k_2) = \mathsf{sym} (x_{k_2})$. 
Since there exists an accepting run at $k_2$, the controller can find the optimal communication scheduling according to \req{optcom_eq}. Then, it is again shown that there exists an accepting run at the next communication time $k_3$. Therefore, it is inductively shown that for \textit{any} communication time step $k_1, k_2, k_3, \ldots$, there exists a communication scheduling such that it is accepted by ${\cal T}_A$. The proof of (E.1) is complete. 
\end{proof}

\section{Illustrative examples}\label{example_sec}
In this section we provide two illustrative examples to
validate our control schemes. All simulations were conducted by Matlab 2016a on a Windows 10, Intel(R) Core(TM), 2.40GHz, 8GB RAM computer. 

\subsection{Linear case}
One of the well-known examples of \rpro{problem} is a motion planning problem of a vehicle, in which we aim to steer a vehicle to a desired goal set in finite time while avoiding obstacles. Let $p = [p_x; p_y] \in \mathbb{R}^2$ and $v = [v_x; v_y] \in \mathbb{R}^2$ be the position and velocity of the vehicle, respectively. Defining the state as $x = [p;\ v] \in \mathbb{R}^4$, the dynamics is assumed to be given by 
\begin{equation} \label{sim_system}
\dot{{x}}  =  \left [
\begin{array}{cccc}
0  &  0 & 1 & 0  \\
0 &  0  & 0 & 1  \\
0 & 0 & -\frac{1}{\tau_x}& 0 \\
0 & 0 & 0 & -\frac{1}{\tau_y}
\end{array}
\right ] x + \left [
\begin{array}{cc}
0 & 0 \\
0 & 0 \\
\frac{1}{\tau_x} & 0 \\ 
0 & \frac{1}{\tau_y} \\
\end{array}
\right ] u + w, 
\end{equation}
where $\tau_x = \tau_y = 0.95$, $u \in \mathbb{R}^2$ is the control input and $w \in \mathbb{R}^2$ is the disturbance. We discretize \req{sim_system} under a sample-and-hold controller with sampling time interval $0.5$ to obtain the corresponding discrete-time system: $x_{k+1} = A x_k + B u_k + w_k$. The input and the disturbance sets are assumed to be given by ${\cal U} = \{ u \in  {\mathbb{R}}^{2} :  \|u\| \leq 5\}$, ${\cal W} = \{ w \in  {\mathbb{R}}^{4} :  \|w\| \leq 0.1\}$, and the position of the vehicle $p$ is constrained by the set ${\cal P}$, where the set ${\cal P} \subset \mathbb{R}^2$ is illustrated in \rfig{state_space}. In the figure, the white regions represent the free-space in which the vehicle can move freely, and the black regions represent obstacles to be avoided. Assuming that the velocity $v$ is constrained by the set ${\cal V} = \{ v \in  {\mathbb{R}}^{2} :  \|v\|_\infty \leq 4\}$, the state-space ${\cal X}$ is given by ${\cal X} = {\cal P} \times {\cal V}$. The initial set is given by ${\cal X}_I = {\cal P}_I \times {\cal V}_I$, where ${\cal P}_I = \{p = [p_1; p_2]\in {\cal P} : -9 \leq p_1 \leq -7 \wedge -9 \leq p_2 \leq -7 \} $ and ${\cal V}_I = \{v \in {\cal V} :  v = 0 \}$. The target set is given by ${\cal X}_F = {\cal P}_F \times {\cal V}_F$, where ${\cal P}_F = \{p = [p_1; p_2]\in {\cal P} : 7 \leq p_1 \leq 9 \wedge -9 \leq p_2 \leq -7 \} $ and ${\cal V}_F = \{v \in {\cal V} : \|v\|_\infty \leq 1 \}$. The sets ${\cal P}_I$ and ${\cal P}_F$ are also illustrated in \rfig{state_space}. 

To generate reference state and control trajectories in an offline manner according to \rsec{offline_control_sec}, we have implemented a standard RRT algorithm\cite{lavalle1999}. While implementing the algorithm, we generate a collision-free trajectory with a safety margin $\epsilon = 0.5$, i.e., all states $\hat{x}_0, \ldots, \hat{x}_L$ are at least $\epsilon$ away from the obstacles (i.e., boundaries of ${\cal X}$), see e.g., \cite{karaman2010b}. {Such safety margin is imposed here to obtain large values of $v_{0, \max}, \ldots, v_{L, \max}$, so as to increase the possibility of finding an accepting run of ${\cal T}_A$.}
The algorithm is successfully terminated and finds the reference state and control trajectories $\hat{x}_0, \ldots, \hat{x}_L$, $\hat{u}_0, \ldots, \hat{u}_{L-1}$ with $L= 169$. Based on the trajectories, we compute $v_{k,\max}$, $k \in \mathbb{N}_{0:L}$. Although the naive approach described in \rsec{naive_approach_sec} was implemented, the algorithm did not terminate due to a lack of memory; indeed, the total number of nodes to construct a binary tree for the worst case is $2^{169} \approx 1.7\times 10^{50}$, which is clearly intractable for the algorithm to be terminated. Thus, it is worth applying our proposed approach to alleviate the computational burden. 

\begin{figure}[t]
   \centering
    \subfigure[Illustration of ${\cal P}$.]{
      {\includegraphics[width=4.5cm]{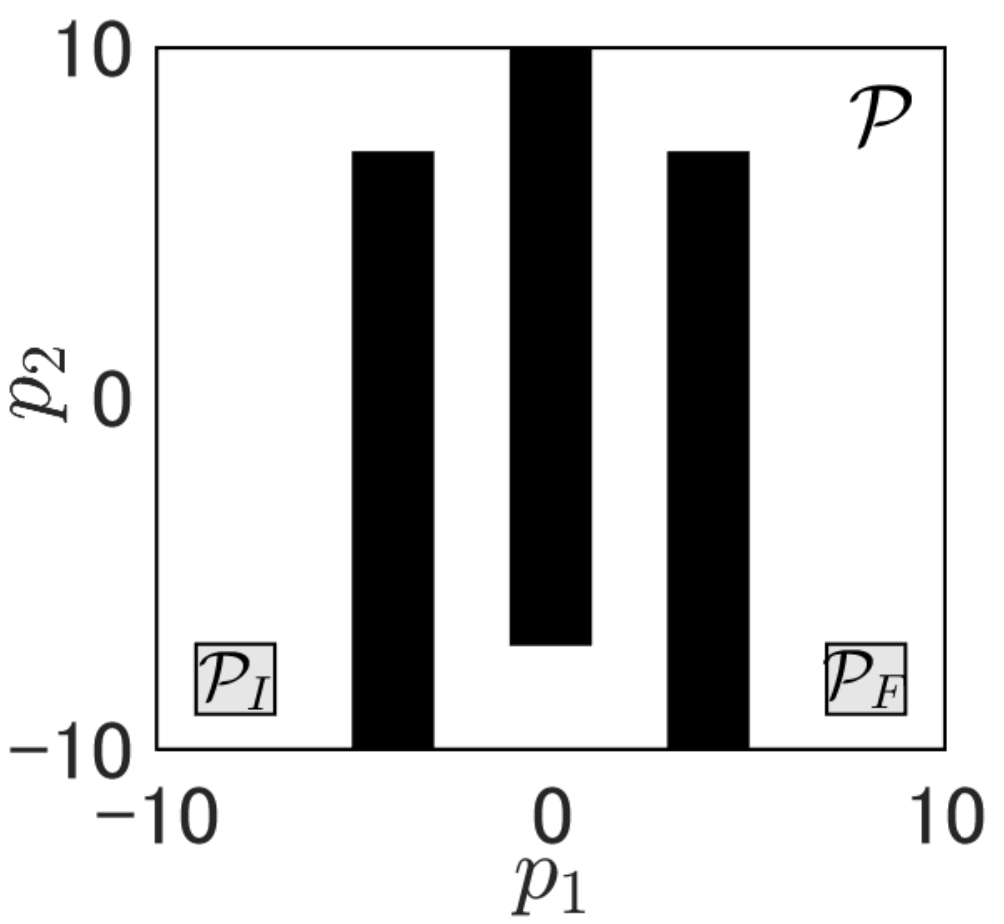}} \label{state_space}}
    \hspace{-0.1cm}
    \subfigure[Trajectory of $p$ (\ralg{offline_alg}).]{
      {\includegraphics[width=4.2cm]{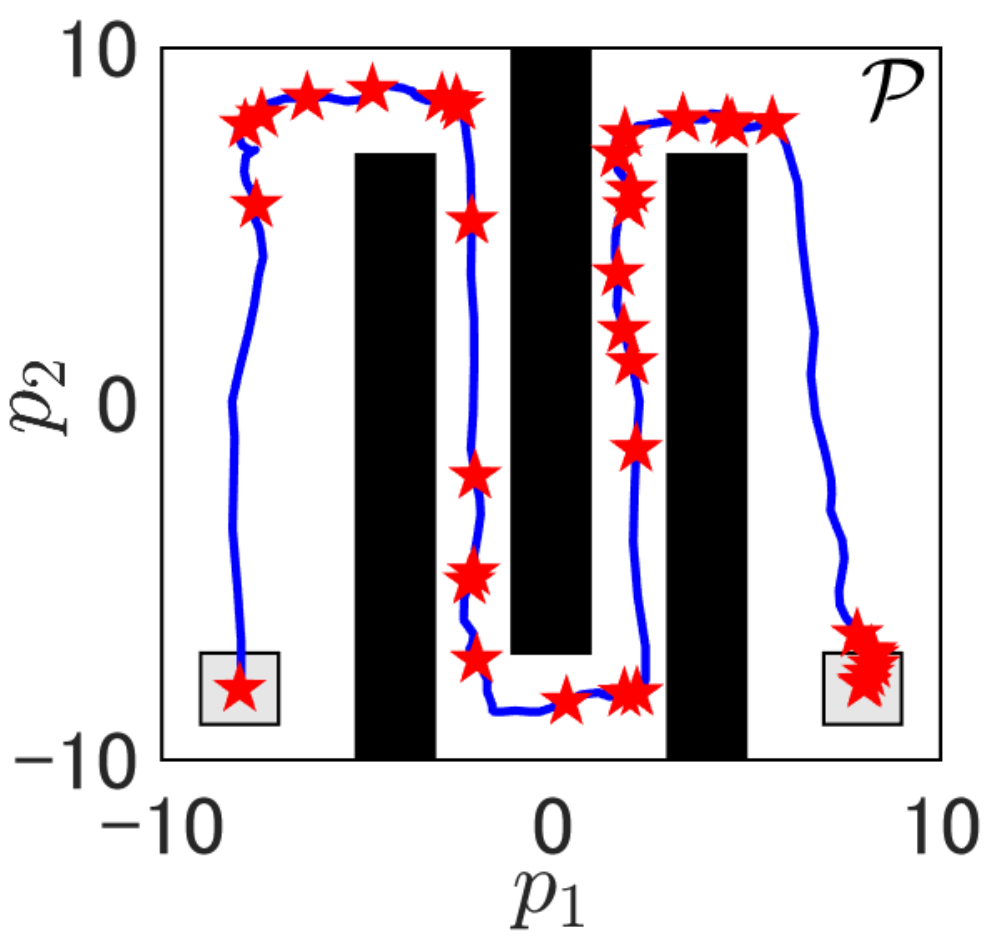}} \label{sample_traj_offline}}
      \hspace{-0.28cm}
      \subfigure[Trajectories of $p$ by applying \ralg{offline_alg} with different initial states.]{
      {\includegraphics[width=4.2cm]{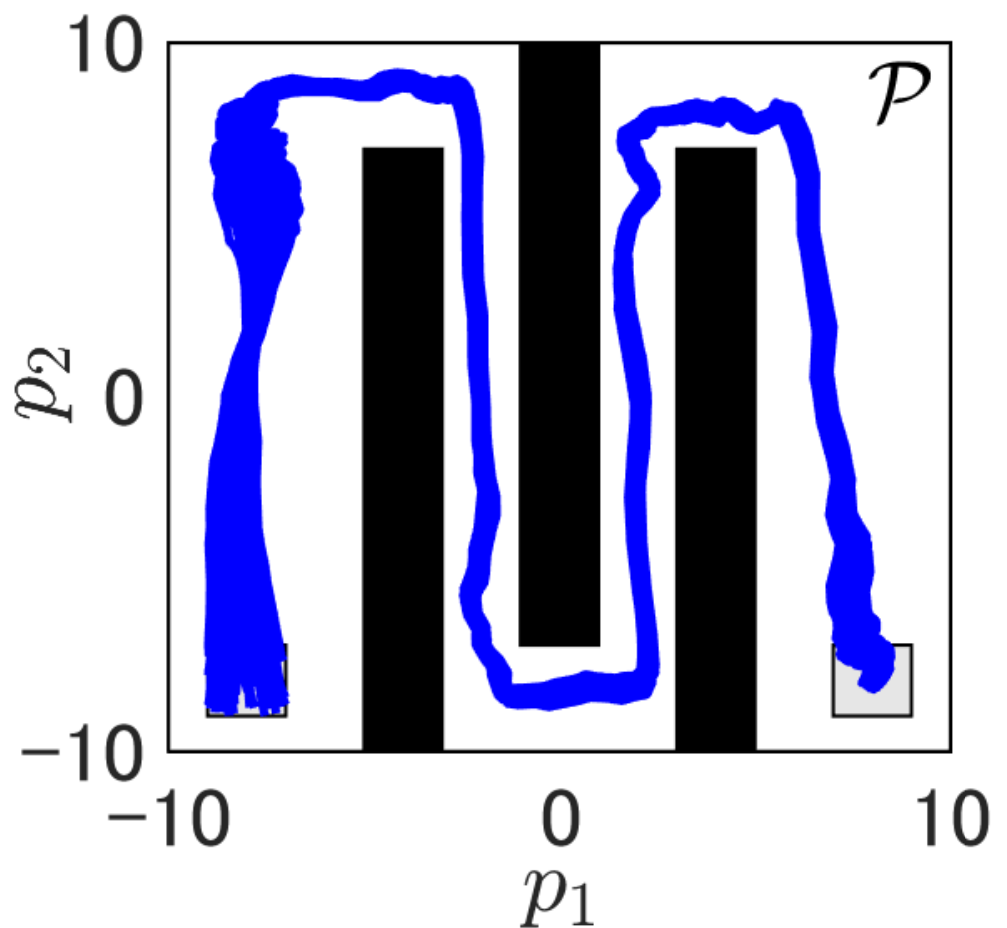}} \label{state_traj_multiple}}
     \hspace{-0.45cm}
    \subfigure[Trajectory of $p$ (\ralg{online_alg}).]{
      {\includegraphics[width=4.4cm]{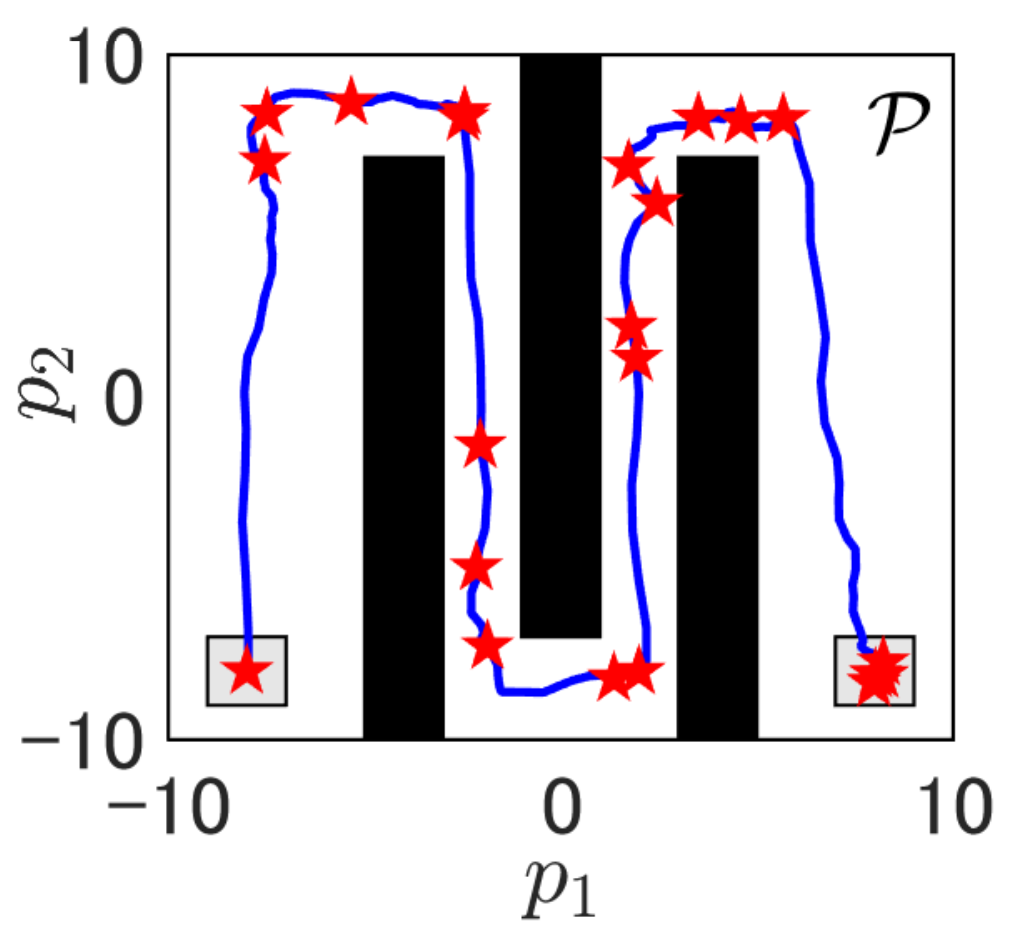}} \label{sample_traj_online}}
      \caption{Illustration of the set ${\cal P}$ and state trajectories of $p$ by applying \ralg{offline_alg} and 3.} 
    \label{state_traj_result}
\end{figure}

\begin{figure}[t]
  \begin{center}
   \includegraphics[width=7.5cm]{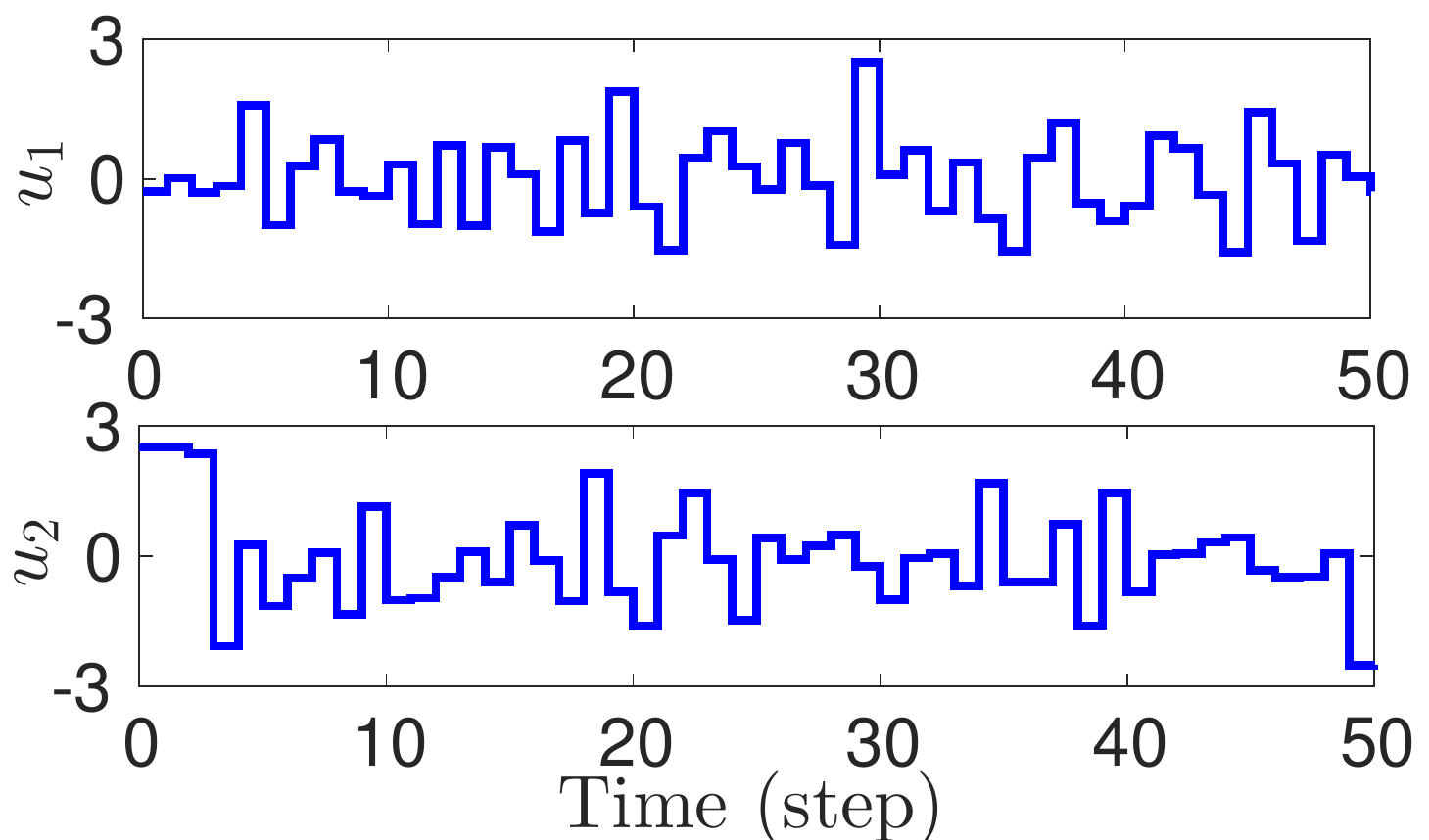}
   \caption{Control inputs over $k\in[0, 50]$ by applying \ralg{offline_alg}. } 
   \label{control_input}
  \end{center}
 \end{figure}

Since it follows that $v_{k,\max} \leq 2$ for all $k \in \mathbb{N}_{0:L-1}$, we set 
$\overline{\nu} = 2$ (see \rrem{select_param}). 
In this subsection, we fix the parameter $M$ as $M=100$, although we will consider different values in the next subsection (nonlinear case). 
Following \rsec{abstraction_sec}, we choose $V(x,y) = \|x-y\|$ as the $\delta$-ISS control Lyapunov function with the control law given by $\kappa (x, y, u) = -K(x-y) + u$, where $K$ is designed so that $\sigma_{\max} (A_{cl}) = 0.7$. Based on this, we have constructed ${\cal T}$ as well as ${\cal T}_A$ by implementing \ralg{delta_construct_alg}. The algorithm took only less than 2 seconds, which is thus shown to be much tractable than the naive approach. 

\rfig{sample_traj_offline} illustrates a sample state trajectory of $p$ by applying the offline communication approach (\ralg{offline_alg}), where $x_0 = [-8; -8; 0; 0]$. 
In the figure, each red star mark represents a state when communication is given (i.e., $x_k$, $k\in\mathbb{N}_{0:L-1}$ with $c_k = 1$). It is shown from the figure that the trajectory enters the target region while avoiding all obstacles. Moreover, the number of communication instants required to achieve reachability is given by $36$ out of the total time steps $L=169$, which is thus shown to achieve the communication reduction. It can be seen from the figure that the transmission frequency tends to be high when the state is close to the obstables, especially when moving in a narrowed space. Intuitively, this is because the safety margins $v_{k,\max}$, $k \in \mathbb{N}_{0:L-1}$ tend to be small especially when the reference trajectory is close to the obstacles, and so the communication is more likely to be given such that the actual state can track to the reference to guarantee safety. \rfig{control_input} illustrates the corresponding control inputs applied to the plant. It is shown from the result that the control inputs satisfy the constraints, i.e., $u_k \in {\cal U}$, $\forall k\in\mathbb{N}_{0:L-1}$. 

To validate \rthm{main_result}, \ralg{offline_alg} has been implemented 100 times with the initial state $x_0$ randomly chosen from ${\cal X}_I$. \rfig{state_traj_multiple} illustrates the resulting state trajectories of $p$. From the figure, all trajectories are indeed shown to achieve reachability while avoiding obstacles, regardless of disturbance influence and initial states. 

\rfig{sample_traj_online} illustrates a sample state trajectory of $p$ by applying the online communication approach (\ralg{online_alg}). Moreover, \rtab{result_tab_linear} illustrates the number of communication instants by applying \ralg{offline_alg} and \ralg{online_alg}, as well as the average computation time required for each communication time instant during the online execution (i.e., the average computation time to execute from \rline{start} to \rline{finish} for \ralg{offline_alg} and from \rline{start2} to \rline{finish2} for \ralg{online_alg}). From \rfig{sample_traj_online} and the table, the online approach is shown to achieve a smaller number of communication instants than the offline approach. On the other hand, it is shown that the computation time for the online approach is longer than the offline approach. This is due to the fact that the online approach is required to update the communication scheduling for each communication time step. 

{
To analyze how much the disturbance size can be tolerated for both the online and offline approaches, we further computed the maximum allowable $w_{\max} > 0$, such that the offline communication scheduling (an accepting run from $s_{init}$) is found, as well as that the online communication scheduling at $k=0$ (an accepting run from $s(0) = \mathsf{sym} (x_0)$ with $x_0 = [-8; -8; 0; 0]$) is found. The results are given by $w_{\max} = 0.12$ for the offline approach, and $w_{\max} = 0.21$ for the online approach. Thus, the results show that a larger size of disturbance is allowed by applying the online approach. As described in \rsec{online_sec}, this is due to the fact that the initial state error is over-estimated when the offline approach is employed (i.e., $\gamma (s(0)) \leq \gamma (s_{init})$). 
}


\begin{table}[t]
\begin{center} 
\caption{The number of communication instants and the average compuataion time during online execution.} \label{result_tab_linear}
{\small 
\begin{tabular}{ccc} \hline 
                          & \ralg{offline_alg} & \ralg{online_alg} \\ \hline \hline
Num. of communication & 36  &  21  \\ \hline
Computation time (s) &  $5\times10^{-4}$ & 0.25  \\ 
\hline \hline
\end{tabular}
}
\end{center}
\end{table}

\subsection{Nonlinear case}\label{sim_nonlinear_sec}
As an example of a nonlinear system, we consider a control problem of an inverted pendulum, whose continuous-time model is discretized with the sampling time interval $\Delta = 0.2$: 
\begin{align*}
{x}_{1,k+1} &=  x_{1,k} + \Delta (x _{2,k} + w_k) \\
{x}_{2, k+1} & = x_{2,k} + \Delta (a \sin x_{1,k} - b x_{2,k} + u_k),  
\end{align*}
where $x_{1, k}$ and $x_{2, k}$ are the states representing the angular position and the velocity of the mass, $u_k \in \mathbb{R}$ is the control input, $w_k \in \mathbb{R}$ is the additive disturbance, and $a, b>0$ are the parameters characterized by the physical quantities such as the gravity constant. 
Assume that $a = 0.6$, $b = 3$ and the control and the disturbance sets are given by ${\cal U} = \{u \in \mathbb{R}: |u| \leq 2 \}$, ${\cal W} = \{w \in \mathbb{R}: |w| \leq 0.01 \}$. Regarding the state constraint, we consider the following two sets: 
\begin{align}
{\cal X}_{A} &= \{x \in \mathbb{R}^2: x = [x_1; x_2] \in \mathbb{R}^2: |x_1| \leq 1 \wedge |x_2| \leq 0.5 \}, \notag\\
{\cal X}_{B} &= \{x \in \mathbb{R}^2: x = [x_1; x_2] \in \mathbb{R}^2: 2|x_1| + 4|x_2| \geq 1 \}, \notag
\end{align}
and ${\cal X} = {\cal X}_A \cap {\cal X}_B$. 
The set ${\cal X}_B$ is given so as to steer the pendulum with sufficiently large position and velocity, see e.g., \cite{saferobotcontrol2016a} for a similar constraint. 
The set ${\cal X}$ is illustrated as the white regions in \rfig{state_space_nonlinear}. 

Let us obtain the upper bound of the error model in \req{vbar} in order to implement the proposed strategies. For $x = [x_1; x_2]\in {\cal X}$ and $y = [y_1; y_2]\in {\cal X}$, it follows that 
\begin{align*}
x^+ _{u, w_1} - y^+ _{u, w_2}
               = & \left[
\begin{array}{cc}
1 & \Delta   \\
a \Delta \eta(x_1, y_1) & 1 - b\Delta 
\end{array}
\right] (x-y) \\ 
& + \left[
\begin{array}{c}
\Delta \\
0
\end{array}
\right] (w_1 - w_2), 
\end{align*}
where $x^+ _{u, w_1} = f(x, u, w_1)$, $y^+ _{u, w_2} = f(y, u, w_2)$, and $\eta (x_2, y_2) = (\sin x_2 - \sin y_2) / (x_2 - y_2)$. Since we have 
\begin{align}
\underset{-1\leq x_1, y_1 \leq 1}{ \min }\ (\sin x_1 - \sin y_1) / (x_1 -y_1) = 0.84, 
\end{align}
the system is Lipschitz continuous satisfying the property in \req{lipschitz} with $L_{x} = 1.03$ and $L_{w} = 0.20$ (see, e.g., \cite{girard2009a} for a related analysis for the continuous case). Moreover, let $V : \mathbb{R}^2 \times \mathbb{R}^2 \rightarrow \mathbb{R}$ be given by $V(x, y) = (x-y)^\mathsf{T} P (x-y)$, where $P = [2.1, 0.45; 0.45, 0.43]$ and $\kappa (x, y, u) = u - k_u (x-y)$ with $k_u = [2.9,\  2.0]$. Then, it can verified that 
\begin{align*}
V ( x^+ _{\kappa, w_1}, &y^+ _{u, w_2})- V (x, y) \\ 
                               & \leq - (x-y)^\mathsf{T} Q (x-y)  + \rho (|w_1-w_2|), 
\end{align*}
where $Q= [0.29, 0; 0, 0.29]$ and $\rho (|w_1-w_2|) = 3.5 | w_1-w_2| + 0.16 |w_1-w_2|^2$. Therefore, it is shown that the function $V$ is $\delta$-ISS control Lyapunov function with respect to $\kappa$, and we can obtain the corresponding error model in \req{vbar}. 

\begin{figure}[t]
   \centering
    \subfigure[State-space ${\cal X}$ (the white region).]{
      {\includegraphics[width=7cm]{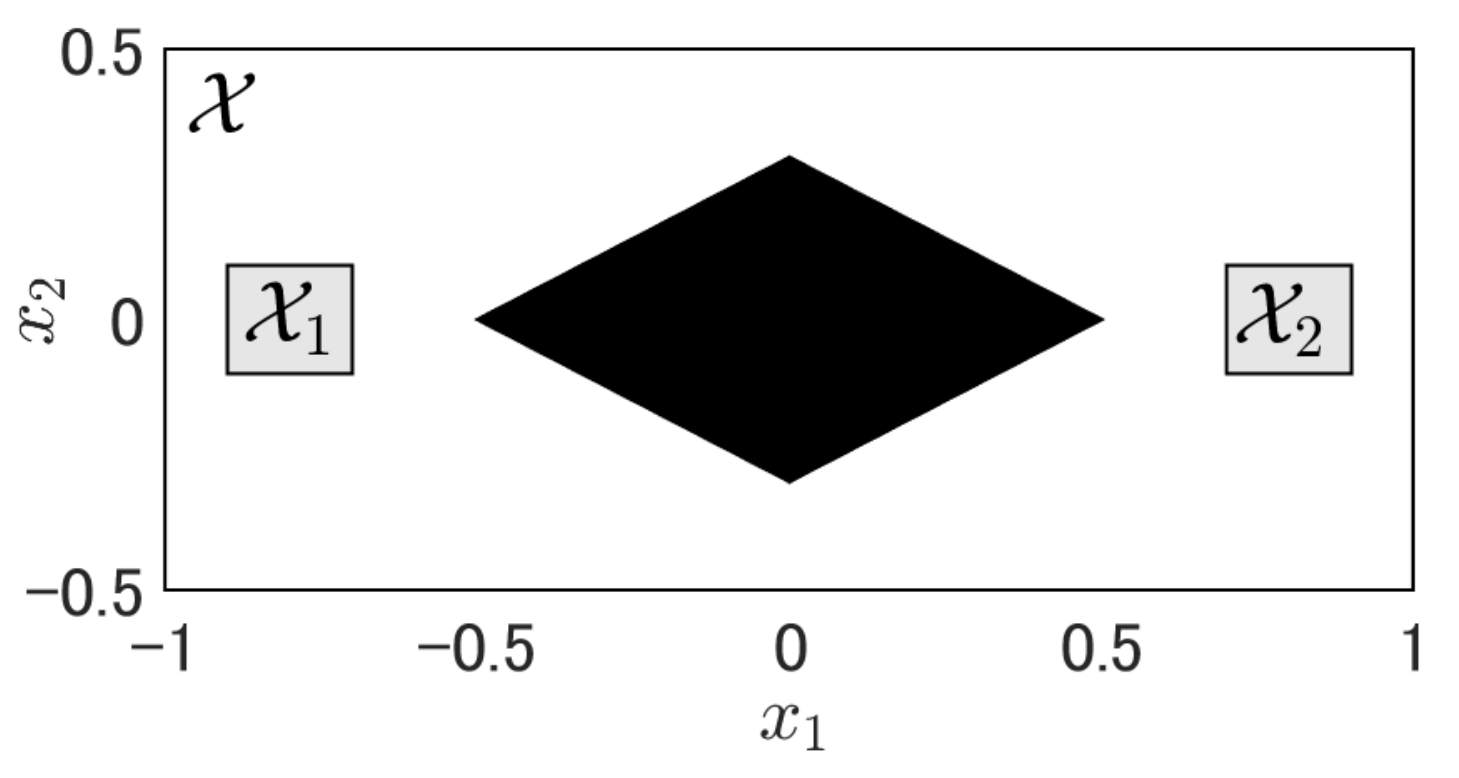}} \label{state_space_nonlinear}}
    \subfigure[State trajectories by applying \ralg{offline_alg} for the time interval $k\in{[0, 1000]}$.]{
      {\includegraphics[width=7cm]{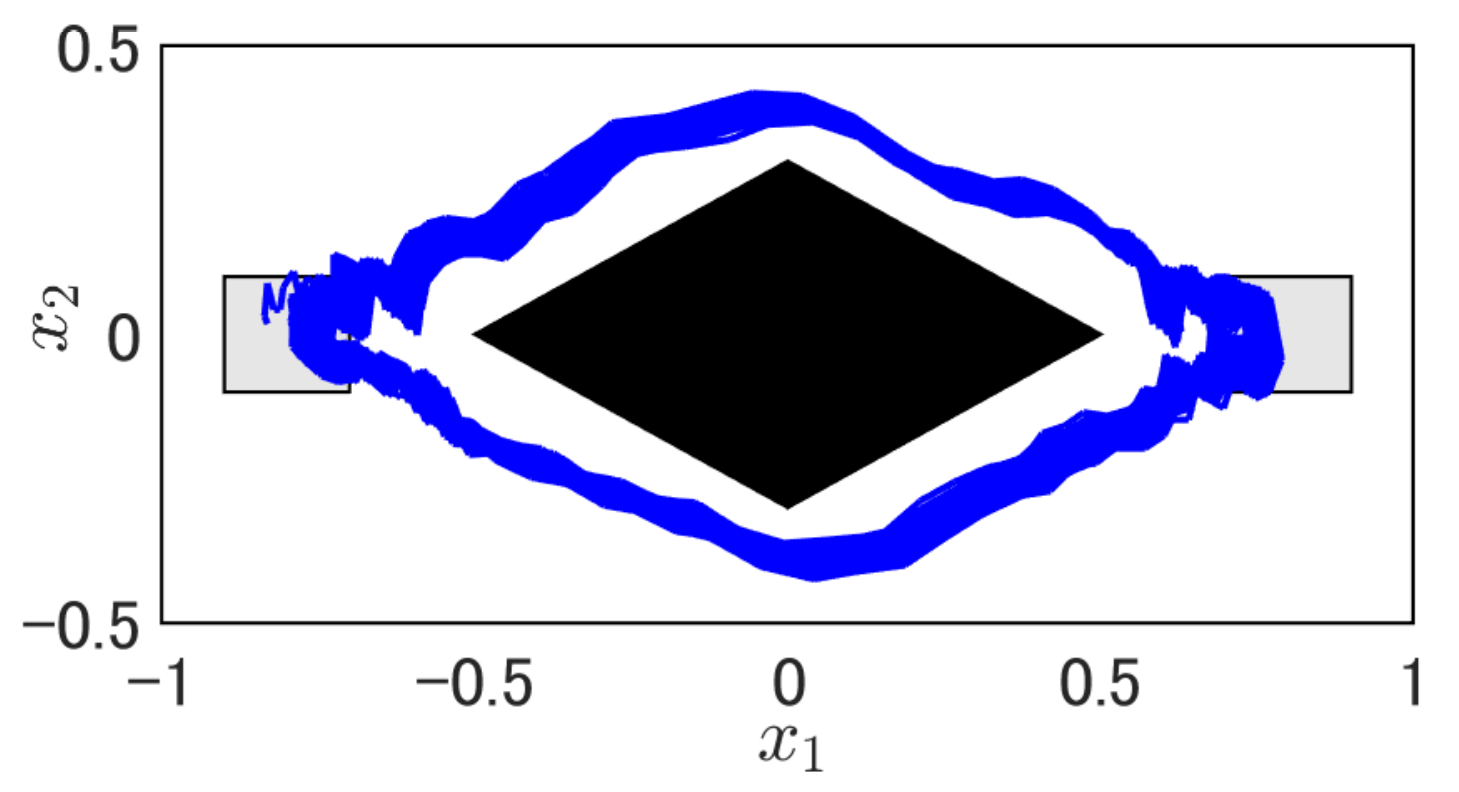}} \label{sample_traj}}
    \label{state_traj_result}
    \caption{State-space ${\cal X}$ and state trajectory by applying \ralg{offline_alg}. } 
\end{figure}

\begin{figure}[t]
  \begin{center}
   \includegraphics[width=8cm]{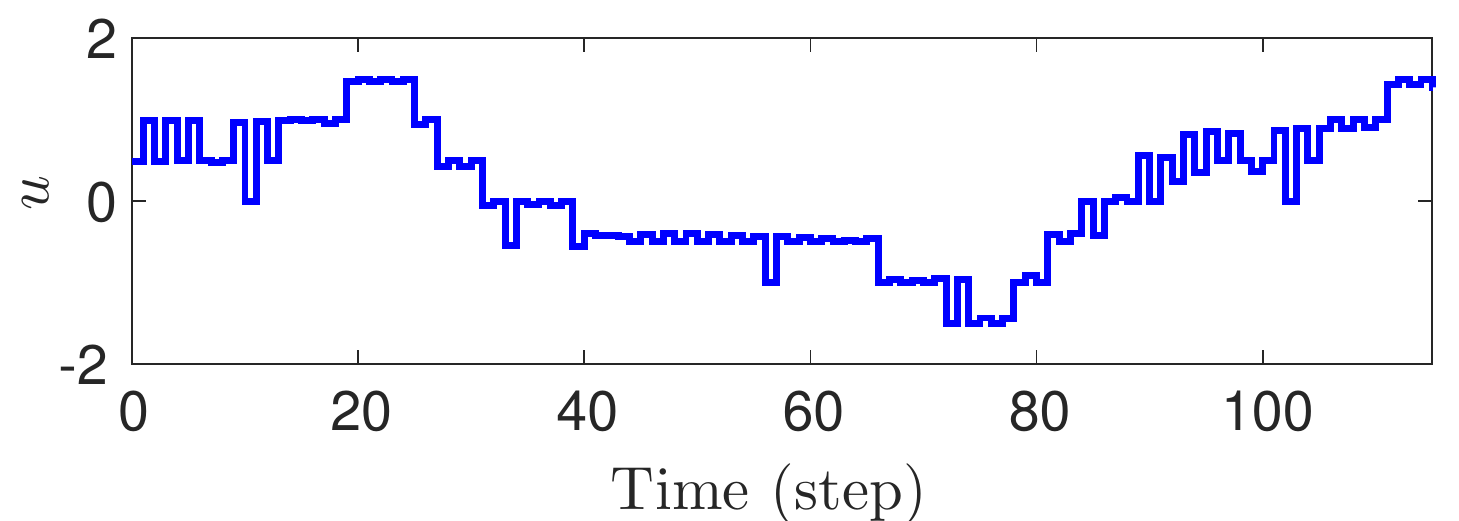}
   \caption{Control inputs over $k\in[0, 120]$ by applying \ralg{offline_alg}. } 
   \label{control_input_nonlinear}
  \end{center}
 \end{figure}

Let ${\cal X}_1 = \{x = [x_1; x_2] \in \mathbb{R}^2: -0.9 \leq x_1 \leq -0.7 \wedge |x_2| \leq 0.1 \}$ and ${\cal X}_2 = \{x = [x_1; x_2] \in \mathbb{R}^2: 0.7 \leq x_1 \leq 0.9 \wedge |x_2| \leq 0.1 \}$. We assume $x_0 = [-0.83; 0.05] \in {\cal X}_1$. Instead of achieving stabilization of the origin, we aim here at designing control and communication strategies such that the state can \textit{periodically traverse} between ${\cal X}_1$ and ${\cal X}_2$. To this aim, we construct ${\cal T}_A$ for both ${\cal X}_I = {\cal X}_1$, ${\cal X}_F = {\cal X}_2$ and ${\cal X}_I = {\cal X}_2$, ${\cal X}_F = {\cal X}_1$, and switch the designed strategies between the two sets during the online implementation. That is, starting from $x_0 \in {\cal X}_1$ we first implement control and communication strategies by setting ${\cal X}_I = {\cal X}_1$, ${\cal X}_F = {\cal X}_2$. Then, once the state enters ${\cal X}_2$ we set ${\cal X}_I = {\cal X}_2$, ${\cal X}_F = {\cal X}_1$ and implement the corresponding strategies. This procedure is iterated for all times so that the state trajectory traverses between ${\cal X}_1$ and ${\cal X}_2$. 

As with the linear case, we implement the RRT algorithm to generate the reference trajectories for both ${\cal X}_I = {\cal X}_1$, ${\cal X}_F = {\cal X}_2$ and ${\cal X}_I = {\cal X}_2$, ${\cal X}_F = {\cal X}_1$. 
For both cases, we set $\nu_{i} = 0.5 i/(M-1)$, $\forall i\in\mathbb{N}_{1:M-1}$ and $M=200$ and construct ${\cal T}_A$ by implementing \ralg{delta_construct_alg}. \rfig{sample_traj} illustrates the resulting state trajectories by implementing the offline communication strategy (\ralg{offline_alg}). The figure shows that the state trajectory actually traverses between ${\cal X}_1$ and ${\cal X}_2$ while remaining inside ${\cal X}$. Moreover, the number of communication instants for the time interval $k \in [0, 1000]$ is $484$, which is thus shown to achieve the communication reduction. 
\rfig{control_input_nonlinear} illustrates the corresponding control inputs applied to the plant. From the result, it is shown that the control inputs satisfy the constraints, i.e., $u_k \in {\cal U}$, $\forall k\in\mathbb{N}_{0:L-1}$. 

In \rsec{computation_sec}, we have discussed the trade-off between the computation time to obtain ${\cal T}_A$ and the number of communication instants according to the selection of $M$. To analyze such trade-off, we construct ${\cal T}_A$ for both ${\cal X}_I = {\cal X}_1$, ${\cal X}_F = {\cal X}_2$ and ${\cal X}_I = {\cal X}_2$, ${\cal X}_F = {\cal X}_1$ with different selection of the parameter $M$ as $M = 400, 100, 50, 10$, and generate the corresponding offline communication schdulings. For each selection of $M$, we measure the {total} computation time to terminate \ralg{delta_construct_alg}. Then, we implement the control and communication strategies such that the state trajectory traverses between ${\cal X}_1$ and ${\cal X}_2$ according to the procedure described above, and count the number of communication instants during the time interval $k \in [0, 1000]$. The results are shown in \rtab{result_tab}. The table illustrates that the number of communication instants increases as $M$ is selected smaller, and the feasible communication scheduling was not found when $M=10$ (with the symbol ``---''); as described in \rsec{computation_sec}, this is because the mismatch between the symbolic model ${\cal T}$ and the original error model in \req{vbar} becomes larger as the partition of the domain $\mathbb{R}$ becomes sparser. 
On the other hand, the computation time to generate the communication scheduling decreases as $M$ is selected smaller, which is due to a decrease of the number of iterations in \ralg{delta_construct_alg}. Therefore, it is shown that there exists a trade-off between the computation time to generate the communication scheduling and the communication load, and the trade-off can be regulated by the tuning parameter $M$. 

\begin{table}[tbp]
\begin{center}
\caption{Number of communication instants during $k \in [0, 1000]$ and the compuataion time to generate the offline communication scheduling.} \label{result_tab}
{\small
\begin{tabular}{ccccc} \hline 
 $M$ & $400$ & $100$ & $50$ & $10$ \\ \hline \hline
Num. of communication & 430  &  484 & 530 & ---  \\ \hline
Computation time (s) &  32 & 5.4 & 0.62 & --- \\ 
\hline \hline
\end{tabular}
}
\end{center}
\end{table}


\section{Conclusion and Future work}
In this paper, we propose control and communication strategies for reachability and safety specifications in a networked control system. The key idea of the proposed approach is to utilize the notion of $\delta$-ISS control Lyapunov function, which captures contractive behaviors between any pair of the state trajectories under a certain state feedback control law. 
The function is given to introduce the error propagation model, which represents how the upper bound of the error between the actual and the reference trajectories behaves according to the occurrence or non-occurrence of communication. Based on the error propagation model, we derive a sufficient condition for the communication scheduling to guarantee reachability and safety. 
Moreover, in order to reduce computational complexity, we introduce the notion of symbolic error system, which represents an abstracted behavior of the error propagation model. The communication scheduling is then given by implementing standard graph search methodologies, and is provided in both offline and online fashion. Finally, we illustrate the benefits of the proposed approach through numerical simulations for both linear and nonlinear cases. 

It should be noted that, incremental stability analysis has been provided recently for complex dynamical systems that have not been considered in this paper, including hybrid systems\cite{postoyan2015}, switched systems\cite{girard2010a}, and mechanical systems\cite{contraction2000}. Thus, future work will involve investigating the applicability of the proposed approach to those types of systems. 
{Moreover, our future work involves investigating the applicability of the proposed approach to \textit{mobile communication networks}, where the network consists of multiple nodes communicating with each other under the randomness of connectivity. The problem may be treated by incorporating the idea of energy-aware packet forwarding protocols, such as those presented in \cite{energyawarerouting1,energyawarerouting2}. Finally, extending the proposed approach to more complex specifications, such as those expressed by Linear Temporal Logic (LTL) formulas, will be taken into account in future investigations. }




\bibliographystyle{IEEEtran}
\bibliography{IEEEabrv,myrefs}

\appendix
\noindent
\textit{(Proof for (E.2) in \rthm{second_result})}: The proof for (E.2) is also given by induction. 
Let us go back again to the initial time $k = 0$, and let
$c^* _{\ell|0}$, $\ell\in\mathbb{N}_{0:L-1}$ with $c^* _{0|0} = c_0 = 1$ be the optimal communication scheduling obtained at $k=0$. In addition, let \req{path_initial} denote the accepting run for $c^* _{\ell|0}$, $\ell\in\mathbb{N}_{0:L-1}$ and $k_1$ be the next communication time from the initial time $k=0$. 
As described in the proof of (E.1), we obtain $v_{k} \leq \gamma(s^* ({k}))$, $\forall k \in\mathbb{N}_{0: k_1}$. Moreover, since $((s^* ({k}), k), c^* _{k|0}, (s^* ({k+1}), k+1)) \in \delta_A$, $\forall k \in\mathbb{N}_{0:k_1-1}$, it follows from (D.2) and (D.3) in \rdef{transition_relation2} that 
\begin{equation}
v_{k} \leq \gamma(s^* ({k})) \leq v_{k, \max},\ \forall k \in\mathbb{N}_{0:k_1}, 
\end{equation}
which implies from \req{safety_eq} that $x_{k} \in {\cal X}$, $\forall k \in\mathbb{N}_{0: k_1}$. 
Thus, the state trajectory guarantees safety for all the time until the next communication time $k_1$. Moreover, from (D.4) in \rdef{transition_relation2}, it follows that $u_k \in {\cal U}$, $\forall k \in\mathbb{N}_{0: k_1}$ and the constraint for the control inputs are satisfied.  

Consider now the next communication time $k_1$, and let $c^* _{k_1+\ell|k_1}$, $\forall \ell \in\mathbb{N}_{0:L-k_1-1}$ be the optimal communication scheduling obtained at $k_1$ according to \req{optcom_eq}. Note that the optimal communication scheduling can be found due to the feasibility property described in (E.1). With a slight abuse of notation, let 
\begin{equation}
(s^* ({k_1}), k_1), (s^* ({1}), k_1 + 1), \ldots, (s^* ({L}), L)
\end{equation}
with $s^* ({k_1}) = s(k_1) = \mathsf{sym} (x_{k_1})$ be the accepting run for $c^* _{k_1+\ell|k_1}$, $\forall \ell \in\mathbb{N}_{0:L-k_1-1}$. 
Moreover, let $k_2 > k_1$ be the next communication time from $k_1$. 
Again, it follows from (D.2) and (D.3) in \rdef{transition_relation2} that 
\begin{equation}
v_{k} \leq \gamma(s^* ({k})) \leq v_{k, \max},\ \forall k \in\mathbb{N}_{k_1 : k_2}. 
\end{equation}
From this and (D.4) in \rdef{transition_relation2}, we obtain $x_{k} \in {\cal X}$, $u_{k} \in {\cal U}$, $\forall k \in\mathbb{N}_{k_1: k_2}$ and the safety is guaranteed until the next communication time $k_2$. 
By following the same procedure as illustrated above, we obtain $x_{k} \in {\cal X}$, $\forall k \in\mathbb{N}_{0:L}$ and $u_k \in {\cal U}$, $\forall k \in\mathbb{N}_{0:L}$. 

Let $k_N < L$ be the last communication time step, where $N$ represents the total number of communication instants. Let $c^* _{k_N+\ell|k_N}$, $\ell \in\mathbb{N}_{0:L-k_N-1}$ be the optimal communication scheduling obtained at $k_N$, and 
\begin{equation}
(s^* ({k_N}), k_N), (s^* ({k_N+1}), k_N + 1), \ldots, (s^* ({L}), L)
\end{equation}
with $s^* ({k_N}) = \mathsf{sym} (x_{k_N})$ be the accepting run for $c^* _{k_N+\ell|k_N}$, $\forall \ell \in\mathbb{N}_{0:L-k_N-1}$. 
Since $k_N$ is the last communication time step, we either have (i) $k_N = L-1$ ($c^* _{k_N|k_N}=1$), or (ii) $c^* _{k_N|k_N} = c^* _{k_N+1|k_N} = \cdots c^* _{L-1|k_N} = 0$. For case (i), it follows that 
$v_{L} \leq \gamma(s^* ({L})) \leq v_{final}$,
since $(s^* ({L}), L) \in S_{A, final}$. Thus, we obtain $x_L \in {\cal X}_F$. 
For case (ii), it follows that 
\begin{equation}
v_k \leq \gamma (s^* (k)) \leq v_{k, \max}, \ \forall k \in \mathbb{N}_{k_N : L-1} 
\end{equation}
and $v_L \leq \gamma(s^* ({L})) \leq v_{final}$. 
Thus, $x_k \in {\cal X}$, $\forall k \in \mathbb{N}_{k_N : L-1}$ and $x_L \in {\cal X}_F$. 
Therefore, the state trajectory achieves reachability and safety for both case (i) and (ii). Moreover, it follows from (D.4) in \rdef{transition_relation2} that $u_k \in {\cal U}$, $\forall k \in \mathbb{N}_{k_N : L-1}$ for both case (i) and (ii). 
Based on the above, it is shown that the state trajectory becomes valid. The proof is complete. 

\end{document}